\numberwithin{equation}{section}
\theoremstyle{plain}
\newtheorem{theorem}[subsection]{Theorem}
\newtheorem{proposition}[subsection]{Proposition}
\newtheorem{lemma}[subsection]{Lemma}
\newtheorem{corollary}[subsection]{Corollary}
\newtheorem{conjecture}[subsection]{Conjecture}
\theoremstyle{definition}
\newtheorem{definition}[subsection]{Definition}
\newtheorem{remark}[subsection]{Remark}
\newtheorem{example}[subsection]{Example}
\renewcommand{\leq}{\leqslant}
\renewcommand{\geq}{\geqslant}
\newsavebox{\proofbox}
\savebox{\proofbox}{\begin{picture}(7,7)%
  \put(0,0){\framebox(7,7){}}\end{picture}}
\newcommand\E{\mathbb{E}}
\newcommand\Z{\mathbb{Z}}
\newcommand\R{\mathbb{R}}
\newcommand\C{\mathbb{C}}
\newcommand\N{\mathbb{N}}
\newcommand\X{\mathbf{X}}
\newcommand\D{\mathcal{D}}
\newcommand\B{\mathcal{B}}
\newcommand\F{\mathbb{F}}
\newcommand\eps{\varepsilon}
\newcommand\Lip{\operatorname{Lip}}
\newcommand\Poly{\operatorname{Poly}}
\newcommand\Phase{{\mathcal P}} 
\newcommand\charac{\operatorname{char}}
\renewcommand\th{{\operatorname{th}}}
\newcommand\ader{{\Delta}}
\newcommand\mder{{\Delta\!\!\!\!\!\hbox{\raisebox{0.2ex}{\tiny\ \textbullet}}\ \!}}
\begin{document}

\title[Finite fields inverse Gowers norm conjecture]{The inverse conjecture for the Gowers norm over finite fields via the correspondence principle}

\author{Terence Tao}
\address{UCLA Department of Mathematics, Los Angeles, CA 90095-1596.
}
\email{tao@math.ucla.edu}

\author{Tamar Ziegler}
\address{Department of Mathematics, Technion, Haifa, Israel 32000}
\email{tamarzr@tx.technion.ac.il}

\thanks{The first author is supported by a grant from the MacArthur Foundation, and by NSF grant CCF-0649473. 
The second author is supported by ISF grant 557/08,  by a Landau fellowship - supported by the Taub foundations, and by an Alon fellowship.}

\subjclass{}

\begin{abstract}  The inverse conjecture for the Gowers norms $U^d(V)$ for finite-dimensional vector spaces $V$ over a finite field $\F$ asserts, roughly speaking, that a bounded function $f$ has large Gowers norm $\|f\|_{U^d(V)}$ if and only if it correlates with a phase polynomial $\phi = e_\F(P)$ of degree at most $d-1$, thus $P: V \to \F$ is a polynomial of degree at most $d-1$.  In this paper, we develop a variant of the Furstenberg correspondence principle which allows us to establish this conjecture in the large characteristic case $\charac(F) \geq d$ from an ergodic theory counterpart, which was recently established by Bergelson and the authors in \cite{berg}. In low characteristic we obtain a partial result, in which the phase polynomial $\phi$ is allowed to be of some larger degree $C(d)$.  The full inverse conjecture remains open in low characteristic; the counterexamples in \cite{finrat}, \cite{lms} in this setting can be avoided by a slight reformulation of the conjecture. 
\end{abstract}

\maketitle

\section{Introduction}

\subsection{The combinatorial inverse conjecture in finite characteristic}

Let $\F$ be a finite field of prime order.  Throughout this paper, $\F$ will be considered fixed (e.g. $\F = \F_2$ or $\F = \F_3$), and the term ``vector space'' will be shorthand for ``vector space over $\F$'', and more generally any linear algebra term (e.g. span, independence, basis, subspace, linear transformation, etc.) will be understood to be over the field $\F$.  

If $V$ is a vector space, $f: V \to \C$ is a function, and $h \in V$ is a shift, we define the (multiplicative) derivative $\mder_h f: V \to \C$ of $f$ by the formula
$$ \mder_h f := (T_h f) \overline{f}$$
where the shift operator $T_h$ with shift $h$ is defined by $T_h f(x) := f(x+h)$.
An important special case arises when $f$ takes the form $f = e_\F(P)$, where $P: V \to \F$ is a function, and $e_\F: \F \to \C$ is the standard character $e_\F(j) := e^{2\pi i j/|\F|}$ for $j=0,\ldots,|\F|-1$.  In that case we see that $\mder_h f = e_\F( \ader_h P )$, where $\ader_h P: V \to \F$ is the (additive) derivative of $P$, defined as
$$ \ader_h P = T_h P - P.$$

Given an integer $d \geq 0$, we say that a function $P: V \to \F$ is a \emph{polynomial of degree at most $d$} if we have $\ader_{h_1} \ldots \ader_{h_{d+1}} P = 0$ for all $h_1,\ldots,h_{d+1} \in V$, and write $\Poly_d(V)$ for the set of all polynomials on $V$ of degree at most $d$; thus for instance $\Poly_0(V)$ is the set of constants, $\Poly_1(V)$ is the set of linear polynomials on $V$, $\Poly_2(V)$ is the set of quadratic polynomials, and so forth.  It is easy to see that $\Poly_d(V)$ is a vector space, and if $V = \F^n = \{ (x_1,\ldots,x_n): x_1,\ldots,x_n \in \F\}$ is the standard $n$-dimensional vector space, then $\Poly_d(V)$ has the monomials $x_1^{i_1} \ldots x_n^{i_n}$ for $0 \leq i_1,\ldots, i_n < |\F|$ and $i_1+\ldots+i_n \leq d$ as a basis\footnote{The restriction $i_1,\ldots,i_n <|\F|$ arises of course from the identity $x^{|\F|} = x$ for all $x \in \F$.}.  

We shall say that a function $f: V \to \C$ is a \emph{phase polynomial of degree at most $d$} if all $(d+1)^{\th}$ multiplicative derivatives $\mder_{h_1} \ldots \mder_{h_{d+1}} f$ are identically $1$, and write $\Phase_d(V)$ for the space of all phase polynomials of degree at most $d$.  We have the following equivalence between polynomials and phase polynomials in the high characteristic case:

\begin{lemma}[Phase polynomials are exponentials of polynomials]\label{equiv-lem} Suppose that $0 \leq d < \charac(\F)$, and $f: V \to \C$.  Then the following are equivalent:
\begin{itemize}
\item[(i)] $f \in \Phase_d(V)$.
\item[(ii)] $f = e^{2\pi i\theta} e_\F(P)$ for some $\theta \in \R/\Z$ and $P \in \Poly_d(V)$.
\end{itemize}
\end{lemma}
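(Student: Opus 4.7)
The direction (ii)$\Rightarrow$(i) will be immediate: the constant phase $e^{2\pi i\theta}$ cancels in $\mder_h$, so $\mder_h(e^{2\pi i\theta}e_\F(P)) = e_\F(\ader_h P)$, and iterating gives $\mder_{h_1}\cdots\mder_{h_{d+1}}f = e_\F(\ader_{h_1}\cdots\ader_{h_{d+1}}P) = 1$. The real content is (i)$\Rightarrow$(ii).

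My plan is to reduce the multiplicative condition to an $\R/\Z$-valued additive polynomial condition on a phase, then use the characteristic hypothesis to quantize that phase into $\tfrac{1}{|\F|}\Z$. First I would set all $h_i = 0$ in the assumption and use $\mder_0 f = |f|^2$ to get $|f|^{2^{d+1}} \equiv 1$, hence $|f|\equiv 1$; so I may write $f = e^{2\pi i\phi}$ for some $\phi : V \to \R/\Z$. The identity $\mder_h f = e^{2\pi i\ader_h\phi}$ turns the hypothesis into $\ader_{h_1}\cdots\ader_{h_{d+1}}\phi \equiv 0 \pmod{\Z}$, i.e.\ $\phi$ is an $\R/\Z$-valued polynomial of degree at most $d$ on $V$.

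I would then induct on $d$ to show $\phi \equiv \theta + P/|\F| \pmod{\Z}$ for some $\theta \in \R/\Z$ and $P \in \Poly_d(V)$. The base $d = 0$ is trivial (take $P = 0$). For $d \geq 1$, observe that $\mder_h f \in \Phase_{d-1}(V)$, so the inductive hypothesis yields
\[ \ader_h \phi(x) \equiv \theta(h) + R_h(x)/|\F| \pmod{\Z} \]
for some $\theta(h) \in \R/\Z$ and $R_h \in \Poly_{d-1}(V)$.

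The main obstacle will be to show $\theta(h) \in \tfrac{1}{|\F|}\Z$, which is precisely where the hypothesis $d < \charac(\F)$ enters. I would sum $\ader_h\phi(th)$ over $t \in \F$: the left side telescopes to $\phi(|\F|h) - \phi(0) = 0$ because $|\F|h = 0$ in $V$, while the right side equals $|\F|\theta(h) + \tfrac{1}{|\F|}\sum_{t\in\F}R_h(th) \pmod{\Z}$. The map $t \mapsto R_h(th)$ is an $\F$-valued polynomial in $t$ of degree at most $d - 1 \leq |\F| - 2$, so the classical power-sum identity $\sum_{t\in\F}t^k \equiv 0 \pmod{|\F|}$ for $0 \leq k \leq |\F|-2$ forces $\sum_t R_h(th)$ to be a multiple of $|\F|$; hence $|\F|\theta(h) \equiv 0 \pmod{\Z}$. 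Consequently $\phi(h) - \phi(0) = \ader_h\phi(0) \in \tfrac{1}{|\F|}\Z/\Z$ for every $h$, so setting $\theta := \phi(0)$ and defining $P(h) \in \F$ by $P(h) \equiv |\F|(\phi(h)-\phi(0)) \pmod{|\F|}$ gives $f = e^{2\pi i\theta}e_\F(P)$; a short computation of $\ader_h P = m(h) + R_h$ (where $\theta(h) = m(h)/|\F|$) confirms $P \in \Poly_d(V)$. Note that the whole argument collapses in low characteristic, since the power-sum identity fails at $k = |\F|-1$.
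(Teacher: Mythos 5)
Your proof is correct. Note that the paper does not give its own argument here: it simply cites \cite[Lemma D.5]{berg}, so there is no in-paper proof to compare against. Your self-contained argument is valid; let me just confirm the two delicate points. First, the reduction to $|f|\equiv 1$ by setting all $h_i=0$ and the rewriting of the hypothesis as an $\R/\Z$-valued polynomial condition on $\phi$ are both fine. Second, the quantization of the ``local constant'' $\theta(h)$ is the real content, and your telescoping-along-a-line argument handles it cleanly: restricting $R_h\in\Poly_{d-1}(V)$ to the line $t\mapsto th$ produces an $\F$-valued polynomial in $t$ of degree at most $d-1\le |\F|-2$, and the classical identity $\sum_{t\in\F}t^k\equiv 0\ \mdlem{|\F|}$ for $0\le k\le |\F|-2$ then kills the $R_h$-contribution, forcing $|\F|\theta(h)\in\Z$. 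This is exactly where the hypothesis $d<\charac(\F)$ is consumed, and you correctly observe that at $k=|\F|-1$ the identity fails, consistent with Remark~\ref{fail}. The final verification that $P\in\Poly_d(V)$ can be streamlined: since $e_\F(P)=e^{-2\pi i\theta}f$ and the unimodular constant cancels in $\mder$, one has $e_\F(\ader_{h_1}\cdots\ader_{h_{d+1}}P)=\mder_{h_1}\cdots\mder_{h_{d+1}}f=1$, and injectivity of $e_\F$ on $\F$ gives $\ader_{h_1}\cdots\ader_{h_{d+1}}P=0$ directly, without needing to track $m(h)+R_h$. This quantization-on-a-line mechanism is also what underlies the explicit constant $K=p^{\lfloor k/p\rfloor+1}$ recorded in Lemma~\ref{disc-lem} (which cites the same source), so your approach is in the spirit of the cited reference.
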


\begin{proof} See \cite[Lemma D.5]{berg}.  
\end{proof}

\begin{remark}\label{fail} The lemma fails in the low characteristic case $d \geq \charac(\F)$; consider for instance the function $f: \F_2 \to \C$ defined by $f(1) := i$ and $f(0) := 1$.  This function lies in $\Phase_2(\F_2)$ but does not arise from a polynomial in $\Poly_2(\F_2)$.
\end{remark}

\begin{definition}[Expectation notation]  If $A$ is a finite non-empty set and $f: A \to \C$ is a function, we write $|A|$ for the cardinality of $A$, and $\E_A f$, $\int_A f$, or $\E_{x \in A} f(x)$ for the average $\frac{1}{|A|} \sum_{x \in A} f(x)$.  
\end{definition}

\begin{definition}[Gowers uniformity norm]\cite{gowers-4}, \cite{gowers}  Let $V$ be a finite vector space, let $f: V \to \C$ be a function, and let $d \geq 1$ be an integer.  We then define the \emph{Gowers norm} $\|f\|_{U^{d}(V)}$ of $f$ to be the quantity
$$ \|f\|_{U^{d}(V)} := |\E_{h_1,\ldots,h_d} \int_V \mder_{h_1} \ldots \mder_{h_{d}} f|^{1/2^{d}},$$
thus $\|f\|_{U^{d+1}(V)}$ measures the average bias in $d^\th$ multiplicative derivatives of $f$.
We also define the \emph{weak Gowers norm} $\|f\|_{u^{d}(V)}$ of $f$ to be the quantity
\begin{equation}\label{udeq-eq} \|f\|_{u^{d}(V)} := \sup_{\phi \in \Phase_{d-1}(V)} |\int_V f \overline{\phi}|,
\end{equation}
thus $\|f\|_{u^{d}(V)}$ measures the extent to which $f$ can correlate with a phase polynomial of degree at most $d-1$.
\end{definition}

\begin{remark}\label{efc}  
It can in fact be shown that the Gowers and weak Gowers norm are in fact norms for $d \geq 2$ (and seminorms for $d=1$), see e.g. \cite{gowers} or \cite{tao-vu}.  Further discussion of these two norms can be found in \cite{gt:inverse-u3}.  In view of Lemma \ref{equiv-lem}, in the high characteristic case $\charac(F) \geq d$ one can replace the phase polynomial $\phi \in \Phase_{d-1}(V)$ in \eqref{udeq-eq} by the exponential $e_\F(P)$ of a polynomial $P \in \Poly_{d-1}(V)$.  However, this is not the case in low characteristic.  For instance, let $\F = \F_2$, $V = \F_2^n$, and consider the symmetric function $S_4: V \to \F_2$ defined by
$$ S_4(x_1,\ldots,x_n) := \sum_{1 \leq i < j < k < l \leq n} x_i x_j x_k x_l.$$
Then the function $f := (-1)^{S_4}$ has low correlation with any exponential $e_\F(P) = (-1)^P$ of a cubic polynomial $P \in \Poly_3(V)$ in the sense that $\E_{x \in V} f e_\F(-P) = o_{n \to \infty}(1)$ (see \cite{lms}, \cite{finrat}); on the other hand, it is not hard to verify that the function 
$$g(x_1,\ldots,x_n) := e^{2\pi i |x|/8 },$$
where $|x|$ denotes the number of indices $1 \leq j \leq n$ for which $x_j=1$, lies in $\Phase_3(V)$ and has a large inner product with $f$; indeed, since $f(x) = +1$ when $|x| = 0,1,2,3 \mod 8$ and $-1$ otherwise, we easily check that
\begin{align*}
\E_{x \in V} f \overline{g} &= \frac{1}{8} (1 + e^{\frac{-2\pi i} {8}} + e^{\frac{-4\pi i}{8}} + e^{\frac{-6\pi i}{ 8}} - e^{\frac{-8\pi i}{8}} - e^{\frac{-10\pi i}{ 8}} - e^{\frac{-12 \pi i}{8}} - e^{\frac{-14 \pi i}{8}}) + o_{n \to \infty}(1) \\
&= \frac{1-i-\sqrt{2} i}{4} + o_{n \to \infty}(1).
\end{align*}
In particular, we see that $\|(-1)^{S_4} \|_{u^4(V)}$ is bounded from below by a positive absolute constant for large $n$.
\end{remark}

Let $\D := \{ z \in \C: |z| \leq 1 \}$ be the compact unit disk.  This paper is concerned with the following conjecture:

\begin{conjecture}[Inverse Conjecture for the Gowers norm]\label{mainconj}  Let $\F$ be a finite field and let $d \geq 1$ be an integer.  Then for every $\delta > 0$ there exists $\eps > 0$ such that $\|f\|_{u^{d}(V)} \geq \eps$ for every finite vector space $V$ and every function $f: V \to \D$ such that $\|f\|_{U^{d}(V)} \geq \delta$.
\end{conjecture}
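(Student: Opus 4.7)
The natural strategy, as signalled in the abstract, is to argue by contradiction through a variant of the Furstenberg correspondence principle. Suppose the conjecture fails: there exist $\delta > 0$, finite vector spaces $V_n$, and functions $f_n : V_n \to \D$ with $\|f_n\|_{U^d(V_n)} \geq \delta$ but $\|f_n\|_{u^d(V_n)} \to 0$. The plan is to encode this sequence as a single ``infinitary'' measure-preserving system, apply the ergodic-theoretic inverse theorem of Bergelson and the authors from \cite{berg} in that system, and then transfer the ergodic-theoretic conclusion back down to a finitary statement, obtaining a contradiction.

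Concretely, first I would build an ultralimit $\X := \prod_{n \to p} V_n$ along a non-principal ultrafilter $p$, equipped with the Loeb measure inherited from normalized counting on each $V_n$, and carrying the action of the countable vector space $W$ obtained as an appropriate internal subgroup of translations. The ultralimit $F := \lim_{n \to p} f_n$ is a bounded measurable function on $\X$, and the Gowers norm inequality $\|f_n\|_{U^d(V_n)} \geq \delta$ transfers, by a routine unpacking of the defining averages, to a bound $\|F\|_{U^d(\X)} \geq \delta$ for the Gowers--Host--Kra seminorm of $F$ on $\X$.

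Second, I would invoke the ergodic inverse theorem from \cite{berg}: a function on such an $\F^\omega$-system with positive $U^d$-seminorm must correlate with a \emph{phase polynomial} $\Phi : \X \to S^1$ of degree at most $d-1$, meaning $\mder_{h_1}\cdots\mder_{h_d}\Phi = 1$ almost everywhere for all $h_1,\ldots,h_d \in W$. In the high characteristic regime $\charac(\F) \geq d$, Lemma \ref{equiv-lem} (applied in the limit system) lets one write $\Phi$ as the exponential of an $\F$-valued polynomial; in low characteristic one only obtains a phase polynomial of some possibly larger bounded degree $C(d)$, which accounts precisely for the weakening in the partial result.

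The principal obstacle, and the technical heart of the correspondence, is the reverse transfer: converting the measurable phase polynomial $\Phi$ on $\X$ into a sequence of genuine phase polynomials $\phi_n \in \Phase_{d-1}(V_n)$ (or $\Phase_{C(d)-1}(V_n)$) on almost every $V_n$, producing correlation $|\E_{V_n} f_n\, \overline{\phi_n}| \gtrsim 1$ and thereby contradicting $\|f_n\|_{u^d(V_n)} \to 0$. Here one must first approximate $\Phi$ in $L^2$ by an ``internal'' function, i.e.\ an ultralimit of honest functions $\phi_n : V_n \to S^1$, and then argue that if the ultralimit is an exact phase polynomial then $\phi_n$ must be an exact phase polynomial for $p$-almost every $n$. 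This step requires a rigidity or approximate-implies-exact theorem for phase polynomials on finite vector spaces, so that small $L^2$-error in the defining identity $\mder_{h_1}\cdots\mder_{h_d}\phi_n = 1$ can be corrected without destroying the correlation with $f_n$. Organising this measurable-to-finitary descent, together with a careful treatment of the ultrafilter-dependent null sets and of the $\F$-module structure on $\Poly_d$ in low characteristic, is where I would expect the main difficulty to lie.
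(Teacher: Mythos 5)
Your high-level plan is the same as the paper's: argue by contradiction, lift the sequence $f_n$ to a single infinite-dimensional $\F^\omega$-system, apply the ergodic inverse theorem of \cite{berg}, and push the resulting phase polynomial back down to the finite $V_n$ via a rigidity/local-testability argument. The packaging differs (you use an ultraproduct with Loeb measure; the paper uses the concrete universal compact space $\D^{\F^\omega}$ with a vague limit of empirical measures $\mu^{(n)} = \E_{x \in V^{(n)}} \delta_{\zeta_{n,x}}$), but that is an equivalent formalism.

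The genuine gap is in the forward transfer, which you dismiss as ``a routine unpacking of the defining averages.'' It is not routine, and it is in fact the main technical contribution of the paper. The ergodic Gowers--Host--Kra seminorm $\|F\|_{U^d(\X)}$ is defined via a $\limsup$ of averages over the \emph{small} subgroups $\F^n$ of the acting group $W \cong \F^\omega$, whereas the finitary norm $\|f_n\|_{U^d(V_n)}$ averages $h_1,\ldots,h_d$ over all of $V_n$. There is no reason a priori that averaging over a tiny cube spanned by a handful of translations should reproduce the average over the whole of $V_n$; this only works if the generators of $W$ are chosen so that local cube averages statistically sample the global averages well, \emph{simultaneously} for the Gowers-norm expression, for the polynomiality test, and for the ergodic averages. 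Your phrase ``an appropriate internal subgroup of translations'' hides this entirely. The paper handles it by introducing \emph{accurate sampling sequences} (Definition \ref{ass}, Proposition \ref{hhh}), constructed by the second-moment method together with an Arzel\`a--Ascoli/union-bound argument over Lipschitz test functions; this is the Varnavides-style random averaging trick advertised in the abstract. Relatedly, you never verify that the limit system is ergodic, which is a hypothesis of Theorems \ref{ergmain-thm} and \ref{ergmain-thm2}; in the paper ergodicity is exactly one of the facts proved from the sampling machinery (via Lemma \ref{avg-lem}). Without these ingredients the ergodic inverse theorem cannot be applied and your argument does not close.

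You do correctly identify the other nontrivial ingredient: the reverse transfer requires an ``approximate phase polynomial is close to an exact one'' statement on finite vector spaces, and that a continuous functional approximant to the ergodic phase polynomial descends to a genuine $\Phase_{k-1}(V_n)$-element for large $n$. This is Lemma \ref{polytest-lem} (local testability), combined with Lemma \ref{test-lem} (so that the local polynomiality test controls the global one). Your observation that this is where the discretisation/rigidity issues live — and where low characteristic causes the degree loss from $d$ to $C(d)$ — is accurate. But as written, the proposal is missing the statistical sampling step that is the load-bearing novelty, and so it does not yet constitute a proof.
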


\begin{remark} This result is trivial for $d=1$, and follows easily from Plancherel's theorem for $d=2$.  The result was established for $d=3$ in \cite{gt:inverse-u3} (for odd characteristic) and \cite{sam} (for even characteristic), and a formulation of Theorem \ref{main} was then conjectured in both papers, in which the phase polynomials were constrained to be $\charac(\F)^\th$ roots of unity.  This formulation of the conjecture turned out to fail in the low characteristic regime $ \charac(\F)+1<d$ (see \cite{finrat}, \cite{lms}); however, the counterexamples given there do not rule out the conjecture as formulated above in this case, basically because of the discussion in Remark \ref{efc}. 

The case when $\delta$ was sufficiently close to $1$ (depending on $d$) was treated in \cite{akklr}, while the case when $\charac(\F)$ is large compared to $d$ and $\delta$ was established in \cite{stv}.  In \cite{finrat}, Theorem \ref{main} was also established in the case when $f$ was a phase polynomial of degree less than $\charac(\F)$.  These results have applications to solving linear systems of equations (and in particular, in finding arithmetic progressions) in subsets of vector spaces \cite{green-tao-finfieldAP4s}, \cite{wolf} and also to polynomiality testing \cite{sam}, \cite{bv}.  Conjecture \ref{mainconj} is also the finite field analogue of a corresponding inverse conjecture for the Gowers norm in cyclic groups $\Z/N\Z$, which is of importance in solving linear systems of equations in sets of integers such as the primes; see \cite{green-tao-linearprimes}, \cite{fhk} for further discussion.  
\end{remark}

The main result of this paper is to establish this conjecture in the high characteristic case.

\begin{theorem}[Inverse Conjecture for the Gowers norm in high characteristic]\label{main}  Conjecture \ref{mainconj} holds whenever $\charac(\F) \ge d$.
\end{theorem}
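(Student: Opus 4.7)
My plan is to argue by contradiction via a Furstenberg-type correspondence principle that reduces Conjecture \ref{mainconj} to the ergodic-theoretic inverse theorem for the $U^d$ seminorm on $\F^\omega$-systems from \cite{berg}, where $\F^\omega := \bigcup_m \F^m$ denotes the countable direct sum. Assume for contradiction that the conjecture fails for some fixed $d \leq \charac(\F)$; then there exist $\delta > 0$, a sequence of vector spaces $V_n$ over $\F$ with $\dim V_n \to \infty$, and functions $f_n : V_n \to \D$ with $\|f_n\|_{U^d(V_n)} \geq \delta$ but $\|f_n\|_{u^d(V_n)} \to 0$. The goal is to extract a limit object carrying an $\F^\omega$-action, apply the ergodic inverse theorem there, and pull the conclusion back to the finite setting for a contradiction.

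\textbf{Constructing the limit system.} For each $n$, I would select an embedding $\iota_n : \F^\omega \hookrightarrow V_n$ (with $\F^m$ landing in $V_n$ once $m \leq \dim V_n$) chosen so that averaging along translates by $\iota_n(\F^\omega)$ recovers the averaging on $V_n$ up to negligible error; this can be arranged by a random or pigeonhole choice of generic embedding. Using a nonprincipal ultralimit along $n \to \infty$, one obtains a probability space $(X,\mu)$ with a measure-preserving $\F^\omega$-action $(T_h)_{h \in \F^\omega}$ and a measurable $F : X \to \D$ arising as the ultralimit of the $f_n$. One must then verify: (i) the ergodic Gowers seminorm $\|F\|_{U^d(X)}$ defined in \cite{berg} is at least $\delta$, obtained as the ultralimit of $\|f_n\|_{U^d(V_n)}$; and (ii) any nontrivial correlation of $F$ with an ergodic phase polynomial of degree $\leq d-1$ on $X$ pulls back to nontrivial $u^d$-correlation of $f_n$ with genuine phase polynomials on $V_n$ along the ultrafilter.

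\textbf{Invoking the ergodic theorem.} The inverse theorem of \cite{berg} asserts that on such an $\F^\omega$-system, $\|F\|_{U^d(X)} > 0$ forces $F$ to correlate with a measurable $\Phi \in L^\infty(X,\mu)$ satisfying $\mder_{h_1} \cdots \mder_{h_d} \Phi = 1$ almost everywhere for all $h_1,\ldots,h_d \in \F^\omega$, that is, an ergodic phase polynomial of degree $\leq d-1$. Thanks to step (i), this applies and yields such a $\Phi$ with $|\int_X F \overline{\Phi}\,d\mu| \geq c(\delta) > 0$. The pullback (ii) then produces phase polynomials $\phi_n \in \Phase_{d-1}(V_n)$ with $|\int_{V_n} f_n \overline{\phi_n}| \geq c(\delta)/2$ for all sufficiently large $n$ in the ultrafilter, contradicting $\|f_n\|_{u^d(V_n)} \to 0$.

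\textbf{Main obstacle.} The crux is the correspondence principle itself, and in particular the two claims (i), (ii). Claim (i) demands that $\iota_n(\F^\omega)$ be ``dense enough'' in $V_n$ that iterated Gowers averaging is faithfully captured by averaging over shifts in this countable subgroup; this forces some care in selecting generic embeddings. Claim (ii) will be the harder step: ergodic phase polynomials are a priori only measurable objects on the limit space, and one must show they can be well approximated (in an $L^1$ or correlation sense) by restrictions of honest phase polynomials on the $V_n$. It is precisely here that $\charac(\F) \geq d$ enters essentially, both through Lemma \ref{equiv-lem} (which provides a rigid polynomial description of $\Phi$ to transfer back) and because the ergodic result of \cite{berg} on which the whole argument rests is itself only available in this regime.
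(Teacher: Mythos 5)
Your high-level strategy is precisely the one the paper follows: argue by contradiction, construct a limit $\F^\omega$-system via a Furstenberg-type correspondence, invoke the ergodic inverse theorem from \cite{berg}, and transfer back. The paper uses vague convergence of measures on the compact space $X=\D^{\F^\omega}$ where you propose ultralimits, but this is a cosmetic difference. However, you have correctly diagnosed but not actually resolved the two substantive difficulties, and in one case you have omitted a hypothesis you must verify.

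First, Theorem \ref{ergmain-thm} requires the limit system to be \emph{ergodic}, and you never mention this. A generic ultralimit or vague limit of the $(V_n,f_n)$ need not be ergodic; one must engineer this. The paper does so by choosing the embeddings $\iota_n$ (your step ``random or pigeonhole choice'') to be \emph{accurate sampling sequences} at a universal sequence of scales $H_1<H_2<\cdots$ (Proposition \ref{hhh}), and then ergodicity of the limit follows from the mean ergodic theorem together with Lemma \ref{avg-lem}, which says that the $\F^{H_r}$-averages along the embedding approximate the global average on $V_n$. The scales $H_j$ must be chosen universally, independently of $n$, precisely so that this works uniformly along the sequence; that is the content of Proposition \ref{hhh}, and it is not automatic from a random embedding without this quantitative control.

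Second, and more seriously, you call your claim (ii) the ``crux'' but then describe it only as an obstacle, without supplying the argument. The paper's solution is the \emph{local testability of phase polynomials} (Lemma \ref{polytest-lem}): approximate the ergodic phase polynomial $\phi$ by $G(T_{\vec b_1}f,\ldots,T_{\vec b_m}f)$ in $L^1(\mu)$ via Stone--Weierstrass, transfer via vague convergence and the sampling lemmas (specifically Lemma \ref{test-lem}) to conclude that the finite function $g^{(n)}=G(T_{\vec b_1\cdot\vec v^{(n)}}f^{(n)},\ldots)$ has small $\int|\mder_{h_1}\cdots\mder_{h_k}g^{(n)}-1|$ on average, and then use local testability to replace $g^{(n)}$ by a genuine $\phi^{(n)}\in\Phase_{k-1}(V^{(n)})$ that is $L^1$-close. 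Without Lemma \ref{polytest-lem} or an equivalent, the transfer step fails, because there is no a priori reason that a function close to satisfying the phase-polynomial derivative identity correlates with an exact phase polynomial. Finally, a small inaccuracy: you assert that Lemma \ref{equiv-lem} (the high-characteristic equivalence of phase polynomials and exponentials of polynomials) is essential to the transfer step, but in fact the paper never uses it there; the weak norm $u^d$ is already defined in terms of phase polynomials $\Phase_{d-1}(V)$, so $\charac(\F)\geq d$ enters \emph{only} through the availability of Theorem \ref{ergmain-thm}.
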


In the low characteristic case we have a partial result:

\begin{theorem}[Partial inverse Conjecture for the Gowers norm]\label{main2}  Let $\F$ be a finite field and let $d \geq 1$ be an integer.  Then for every $\delta > 0$ there exists $\eps > 0$ such that $\|f\|_{u^{k}(V)} \geq \eps$ for every finite vector space $V$ and every function $f: V \to \D$ such that $\|f\|_{U^{d}(V)} \geq \delta$, where $k=C(d)$ depends only on $d$.
\end{theorem}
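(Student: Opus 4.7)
The plan is to run essentially the same Furstenberg-style correspondence argument that is used to deduce Theorem \ref{main} from the ergodic inverse theorem of Bergelson--Tao--Ziegler \cite{berg}, the only difference being that in low characteristic the ergodic inverse theorem gives correlation with a phase polynomial of some (possibly larger) degree $k = C(d)$, rather than of the sharp degree $d-1$. Concretely, I would argue by contradiction: suppose Theorem \ref{main2} fails for some $\delta > 0$; then for every $k \geq 1$ one can find a finite vector space $V_k$ and a function $f_k : V_k \to \D$ with $\|f_k\|_{U^d(V_k)} \geq \delta$, yet $\|f_k\|_{u^k(V_k)} \to 0$ as $k \to \infty$.

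First I would apply the correspondence principle developed earlier in the paper to this sequence, extracting (via an ultralimit) an ergodic $\F^\omega$-system $(X, \B, \mu, T)$ together with a bounded measurable function $F : X \to \D$ such that the ergodic Gowers--Host--Kra seminorm $\|F\|_{U^d(X)}$ is at least $\delta$, while $F$ is orthogonal in $L^2(\mu)$ to every ergodic phase polynomial of \emph{every} finite degree on $X$; this last assertion encodes the hypothesis $\|f_k\|_{u^k(V_k)} \to 0$ via a standard diagonal and Lipschitz-approximation argument. I would then invoke the low-characteristic case of the ergodic inverse theorem from \cite{berg}, which asserts that any such $F$ with $\|F\|_{U^d(X)} > 0$ must have nonzero inner product with an ergodic phase polynomial of degree at most $C(d)$ on $X$, where $C(d)$ depends only on $d$. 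Comparing these two statements yields the desired contradiction.

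The main obstacle is the correspondence step itself, which must be set up carefully enough that the combinatorial phase polynomials in $\Phase_{k-1}(V_k)$ indeed transfer, in the limit, to all ergodic phase polynomials of degree at most $k-1$ on $X$ (and hence, letting $k \to \infty$, to all ergodic phase polynomials of arbitrary finite degree). Because Lemma \ref{equiv-lem} is unavailable in low characteristic, one must work throughout with the phase-polynomial definition directly rather than with exponentials of polynomials, as was possible in the proof of Theorem \ref{main}. Once this dictionary is in place, the degree loss $d \rightsquigarrow C(d)$ is inherited entirely from the low-characteristic step bound in the ergodic inverse theorem of \cite{berg}, and no additional combinatorial input is required.
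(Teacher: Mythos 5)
Your high-level plan is the same as the paper's: run the Furstenberg-style correspondence used for Theorem \ref{main}, replacing Theorem \ref{ergmain-thm} by the low-characteristic ergodic inverse theorem (Theorem \ref{ergmain-thm2}), so that the degree loss $d \rightsquigarrow C(d)$ is inherited entirely from the ergodic side. Indeed the paper proves Theorem \ref{main2} first and then remarks that Theorem \ref{main} is the identical argument with $k$ set equal to $d$. But two issues in your write-up are substantive enough to flag.

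First, the setup of the contradiction is off. You should fix $k := C(d)$ at the outset, $C(d)$ being the quantity already supplied by Theorem \ref{ergmain-thm2} (increasing $k$ to ensure $k \geq d$), assume the conclusion fails for \emph{this} specific $k$, and extract a single sequence $f^{(n)}: V^{(n)} \to \D$ with $\|f^{(n)}\|_{U^d(V^{(n)})} \geq \delta$ and $\|f^{(n)}\|_{u^{k}(V^{(n)})} \to 0$. Your proposal instead diagonalizes over $k \to \infty$ to build a limit object orthogonal to phase polynomials of every finite degree. This is both unnecessary and imprecise: the negation of the theorem only furnishes a threshold $\delta_k$ depending on $k$, not a uniform $\delta$, so without first pinning $k$ down one cannot guarantee the lift $F$ satisfies $\|F\|_{U^d(\X)} \geq \delta > 0$.

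Second, and more importantly, the claim that ``no additional combinatorial input is required'' conceals the real work in the transfer. After Theorem \ref{ergmain-thm2} produces a phase polynomial $\phi \in \Phase_{k-1}(\X)$ with $|\langle f, \phi\rangle| > c$, one approximates $\phi$ by a cylinder function $G(T_{\vec b_1} f, \ldots, T_{\vec b_m} f)$ and pushes this back to $V^{(n)}$ via the correspondence. What one gets on $V^{(n)}$ is a bounded function whose iterated derivatives are close to $1$ only in an $L^1$-averaged sense, not an actual element of $\Phase_{k-1}(V^{(n)})$. Converting that into an honest finite phase polynomial — needed to contradict $\|f^{(n)}\|_{u^k(V^{(n)})} \to 0$ — requires the local testability lemma (Lemma \ref{polytest-lem}), which is genuine additional combinatorial content and is in fact what the paper's appendix is devoted to. Your proposal never mentions it, and the direction of your transfer (``combinatorial phase polynomials transfer to ergodic ones'') is backwards: the crux is going from the ergodic phase polynomial back down to a combinatorial one. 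Relatedly, your aside about Lemma \ref{equiv-lem} is a red herring: the proof of Theorem \ref{main} in the paper already works exclusively with phase polynomials rather than with $e_\F(P)$, so nothing changes on this front in low characteristic. Finally, you assert ergodicity of the limit system without comment; in the paper this is nontrivial and rests on the Varnavides-style random sampling construction (Proposition \ref{hhh} and Lemma \ref{avg-lem}).
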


\begin{remark}  One could in principle make the quantity $k=C(d)$ in Theorem \ref{main2} explicit, but this would require analyzing the arguments in \cite{berg} in careful detail.  One should however be able to obtain reasonable values of $k$ for small $d$ (e.g. $d=4$).
\end{remark}

The proofs of Theorems \ref{main}, \ref{main2} rely on four additional ingredients:
\begin{itemize}
\item An ergodic inverse theorem for the Gowers norm for $\F^\omega$-systems (Theorems \ref{ergmain-thm}, \ref{ergmain-thm2}), established in \cite{berg};
\item The Furstenberg correspondence principle\cite{furst}, combined with the random averaging trick of Varnavides\cite{varnavides};
\item A statistical sampling lemma (Proposition \ref{hhh}); and
\item Local testability of phase polynomials (Lemma \ref{polytest-lem}), essentially established in \cite{akklr}.
\end{itemize}

Of these ingredients, the ergodic inverse theorem is the most crucial, and we now pause to describe it in detail.

\subsection{The ergodic inverse conjecture in finite characteristic}

Let $\F^\omega := \bigcup_{n=0}^\infty \F^n$ be the inverse limit of the finite-dimensional vector spaces $\F^n$, where each $\F^n$ is included in the next space $\F^{n+1}$ in the obvious manner; equivalently, $\F^\omega$ is the space of sequences $(x_i)_{i=1}^\infty$ with $x_i \in \F$, and all but finitely many of the $x_i$ non-zero.  This is a countably infinite vector space over $\F$.  

\begin{definition}[$\F^\omega$-system]  A \emph{$\F^\omega$-system} is a quadruplet $\X = (X,\B,\mu,(T_g)_{g \in \F^\omega})$, where $(X,\B,\mu)$ is a probability space, and $T: h \mapsto T_h$ is a measure-preserving action of $\F^\omega$ on $X$, thus for each $h \in \F^\omega$, $T_h: X \to X$ is a measure-preserving bijection such that $T_h \circ T_k = T_{h+k}$ for all $h,k \in \F^\omega$.  Given any measurable $\phi: X \to \C$ and $h \in \F^\omega$, we define $T_h \phi: X \to \C$ to be the function $T_h \phi := \phi \circ T_{h}$, and $\mder_h \phi: X \to \C$ to be the function $\mder_h \phi := T_h \phi \cdot\overline{\phi}$.  We also define the inner product $\langle f, g \rangle := \int_X f \overline{g}\ d\mu$ for all $f, g \in L^2(\X)$, where the Lebesgue spaces $L^p(\X) = L^p(X,\B,\mu)$ are defined in the usual manner.  We say that the system is \emph{ergodic} if the only $\F^\omega$-invariant functions on $L^2(\X)$ are the constants.
\end{definition}

\begin{definition}[Phase polynomial]  Let $\X = (X,\B,\mu,(T_g)_{g \in \F^{\omega}})$ be an $\F^\omega$-system, and let $d \geq 0$.  We say that a function $\phi \in L^\infty(\X)$ is a \emph{phase polynomial of degree at most $d$} if we have $\mder_{h_1} \ldots\mder_{h_{d+1}} \phi = 1$ $\mu$-a.e. for all $h_1,\ldots,h_{d+1} \in \F^\omega$.  We let $\Phase_d(\X)$ denote the space of all phase polynomials.  
\end{definition}

\begin{remark} By setting $h_1=\ldots=h_{d+1}=0$ we see that every phase polynomial $\phi \in \Phase_d(\X)$ has unit magnitude: $|\phi|=1$ $\mu$-a.e..
\end{remark}

\begin{definition}[Gowers-Host-Kra seminorms]\cite{host-kra}  Let $\X = (X,\B,\mu,(T_g)_{g \in \F^{\omega}})$ be a $\F^\omega$-system, and let $\phi \in L^\infty(\X)$.  We define the \emph{Gowers-Host-Kra seminorms} $\|\phi\|_{U^d(\X)}$ for $d \geq 1$ recursively as follows:
\begin{itemize}
\item If $d=1$, then $\| \phi \|_{U^1(\X)} := \limsup_{n \to \infty} \left(\| \E_{h \in \F^n} T_h \phi \|_{L^2(X,\mu)}^2\right)^{1/2}$;
\item If $d>1$, then $\| \phi \|_{U^d(\X)} := \limsup_{n \to \infty} \left(\| \mder_h \phi \|_{U^{d-1}(X,\mu,T)}^{2^{d-1}} \right)^{1/2^d}$.
\end{itemize}
We also define the \emph{weak Gowers-Host-Kra seminorm} $\|\phi\|_{u^d(\X)}$ as
$$ \|\phi\|_{u^d(\X)} := \sup_{\psi \in \Phase_{d-1}(\X)} |\langle \phi, \psi \rangle|.$$
\end{definition}

\begin{example} If $\phi \in \Phase_{d-1}(\X)$ is a phase polynomial of degree at most $d-1$, then $\| \phi \|_{U^d(\X)} = \| \phi \|_{u^d(\X)} = 1$.  
\end{example}

\begin{remark} One can use the ergodic theorem to show that the limits here in fact converge, but we will not need this.  The $U^d$ are indeed seminorms, but we will not need this either.  
\end{remark}

In \cite[Corollaries 1.26,1.27]{berg}, the following ergodic theory analogues of Theorems \ref{main}, \ref{main2} was shown:

\begin{theorem}[Inverse Conjecture for the Gowers-Host-Kra seminorm for high characteristic]\label{ergmain-thm}  Let $\X = (X,\B,\mu,(T_g)_{g \in \F^{\omega}})$ be an ergodic $\F^\omega$-system, let $\charac(\F) \geq d \geq 1$, and let $\phi \in L^\infty(\X)$ be such that $\|\phi\|_{U^d(\X)} > 0$.  Then $\|\phi\|_{u^d(\X)} > 0$.
\end{theorem}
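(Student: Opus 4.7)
The plan is to adapt Host and Kra's analysis of the $U^d$ seminorms on $\Z$-actions to the setting of $\F^\omega$-systems, and to exploit the high-characteristic assumption to identify the relevant characteristic factor very concretely. The first step is to construct, for each $d \geq 1$, a factor $\mathcal{Z}_{d-1}(\X) \subseteq \B$ which is \emph{characteristic} for the $U^d$ seminorm, in the sense that
\[
\|\phi - \E(\phi \mid \mathcal{Z}_{d-1})\|_{U^d(\X)} = 0
\]
for every $\phi \in L^\infty(\X)$. This is standard and proceeds by induction on $d$ via a van der Corput / Cauchy-Schwarz argument together with the recursive definition of $U^d$ in terms of $U^{d-1}$ applied to multiplicative derivatives $\mder_h \phi$. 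With this in hand, it suffices to show that whenever $\E(\phi \mid \mathcal{Z}_{d-1}) \neq 0$, some phase polynomial $\psi \in \Phase_{d-1}(\X)$ satisfies $\langle \phi, \psi \rangle \neq 0$.

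The second and central step is a structure theorem for $\mathcal{Z}_{d-1}$: I would prove by induction on $d$ that under the hypothesis $\charac(\F) \geq d$, the space $L^2(\mathcal{Z}_{d-1})$ equals the $L^2$-closed linear span of $\Phase_{d-1}(\X)$. The base case $d = 1$ is easy: by ergodicity $\mathcal{Z}_0$ is the trivial factor, and nonzero mean gives correlation with the constant function $1 \in \Phase_0(\X)$. For the inductive step, one shows that $\mathcal{Z}_{d-1}$ is an abelian isometric extension of $\mathcal{Z}_{d-2}$ (via a Host--Kra-type extension result), and analyses the associated cocycle $\sigma : \F^\omega \times \mathcal{Z}_{d-2} \to \T$. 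Using a Conze--Lesigne-type functional equation forced by the $U^d$ characteristic property, one shows that $\sigma$ is cohomologous to a phase-polynomial cocycle of degree $d-1$. Lemma \ref{equiv-lem} together with $\charac(\F) \geq d$ then allows one to lift these phase-polynomial cocycles to genuine polynomial cocycles of degree $d-1$; a Fourier decomposition on the fibre group, combined with the inductive hypothesis on $\mathcal{Z}_{d-2}$, shows that the resulting fibre eigenfunctions lie in $\Phase_{d-1}(\X)$, closing the induction.

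With the structure theorem established, the conclusion is immediate: if $\|\phi\|_{U^d(\X)} > 0$ then $\E(\phi \mid \mathcal{Z}_{d-1})$ is a nonzero element of the $L^2$-closed span of $\Phase_{d-1}(\X)$, so some $\psi \in \Phase_{d-1}(\X)$ satisfies $\langle \phi, \psi \rangle \neq 0$, yielding $\|\phi\|_{u^d(\X)} > 0$.

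The main obstacle is the inductive structure theorem, and within it the identification of the extension cocycle as a phase-polynomial cocycle. The existence and abelian nature of the extension follow from reasonably standard Host--Kra machinery adapted to $\F^\omega$-actions, but the Conze--Lesigne analysis, the identification of the Mackey group, and the verification that the high-characteristic hypothesis rules out pathological cocycles (such as those responsible for the low-characteristic counterexamples alluded to in Remark \ref{efc}) constitute the technically deepest part of the argument. This is precisely where the assumption $\charac(\F) \geq d$ is essential, via Lemma \ref{equiv-lem}: without it, phase polynomials need not be exponentials of polynomials, and the cocycle classification becomes genuinely more complicated.
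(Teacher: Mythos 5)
The paper does not actually prove Theorem \ref{ergmain-thm}: it is imported as a black box from \cite[Corollaries 1.26, 1.27]{berg}, and this is precisely the point of the correspondence-principle strategy here --- the contribution of the present paper is the finitary transfer (Proposition \ref{hhh}, Lemmas \ref{sampling-lem}--\ref{test-lem}, Lemma \ref{polytest-lem}, and the argument of Section 4), not the ergodic structure theory. Your sketch is therefore re-deriving the imported ingredient rather than proving the statement by the route the paper uses, which is simply to cite it.

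That said, your sketch is broadly aligned with the actual strategy of \cite{berg}: construct the Host--Kra characteristic factors $\mathcal{Z}_{d-1}(\X)$, and prove by induction that in characteristic $\geq d$ each $\mathcal{Z}_{d-1}$ is an Abramov system, i.e.\ $L^2(\mathcal{Z}_{d-1})$ is the closed span of $\Phase_{d-1}(\X)$, with the high-characteristic hypothesis entering through Lemma \ref{equiv-lem} and the cocycle classification, as you say. The problem is that the step you explicitly defer --- the Conze--Lesigne/Mackey analysis that shows the extension cocycle from $\mathcal{Z}_{d-2}$ to $\mathcal{Z}_{d-1}$ is cohomologous to a phase-polynomial cocycle, together with the degree bookkeeping that keeps the degree at $d-1$ rather than some larger $C(d)$ --- is not a routine adaptation of the Host--Kra $\Z$-theory. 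In the $\Z$ setting the characteristic factors are inverse limits of nilsystems, which are in general \emph{not} Abramov, so the analogous structure theorem there gives a genuinely weaker conclusion, and there is no ready-made argument to transplant; establishing the Abramov property over $\F^\omega$ with the sharp degree bound under $\charac(\F) \geq d$ is essentially the whole content of \cite{berg}, and is also exactly where the argument degrades to $k = C(d)$ in low characteristic. (Two further details worth flagging: the fiber group of the isometric extension need not be $\T$ in the $\F^\omega$ setting, and one must be careful about the distinction between a phase-polynomial cocycle in the cocycle variable versus in the base variable.) So what you have written is an accurate high-level roadmap of the cited reference, not a proof of the theorem, and it should be presented as such.
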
 

\begin{theorem}[Partial Inverse Conjecture for the Gowers-Host-Kra seminorm for general characteristic]\label{ergmain-thm2}  Let $\X = (X,\B,\mu,(T_g)_{g \in \F^{\omega}})$ be an ergodic $\F^\omega$-system, let $d \geq 1$, and let $\phi \in L^\infty(\X)$ be such that $\|\phi\|_{U^d(\X)} > 0$.  Then $\|\phi\|_{u^k(\X)} > 0$ for some $k=C(d)$ depending only on $d$.
\end{theorem}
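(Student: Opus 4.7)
The plan is to induct on $d$, with the inductive hypothesis being the statement of the theorem for smaller values of $d$. The base case $d=1$ is immediate from the ergodic theorem: $\|\phi\|_{U^1(\X)} > 0$ forces the averages $\E_{h \in \F^n} T_h \phi$ to have nonzero $L^2$-limit, which by ergodicity of $\X$ is a nonzero constant, so $\phi$ correlates with the trivial phase polynomial $1 \in \Phase_0(\X)$.

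For the inductive step, suppose $\|\phi\|_{U^d(\X)} > 0$. By the recursive definition of the seminorm, there is a positive-density set $H \subset \F^\omega$ such that $\|\mder_h \phi\|_{U^{d-1}(\X)}$ is bounded away from zero for $h \in H$. The inductive hypothesis (applied to each $\mder_h \phi$, passing to an ergodic component if necessary) produces a phase polynomial $\psi_h \in \Phase_{C(d-1)-1}(\X)$ with $|\langle \mder_h \phi, \psi_h\rangle|$ bounded below. Using the identity $\mder_{h+k} \phi = T_k(\mder_h \phi) \cdot \mder_k \phi$ together with a Gowers--Cauchy--Schwarz argument, one shows that the assignment $h \mapsto \psi_h$ is an approximate cocycle: for a positive-density set of pairs $(h,k)$, the ``cocycle defect'' $\psi_{h+k} \cdot \overline{T_k \psi_h \cdot \psi_k}$ is, up to a lower-order phase polynomial, close to $1$.

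The key technical step I would attempt is \emph{cocycle straightening}: after passing to an ergodic extension $\tilde\X$ if needed, one upgrades $h \mapsto \psi_h$ to an honest phase-polynomial-valued cocycle and then solves the cohomological equation $\mder_h \Psi = \psi_h$ for some $\Psi \in L^\infty(\tilde\X)$. The resulting $\Psi$ is a phase polynomial of degree at most $C(d-1) + O(1)$ with which $\phi$ correlates on $\tilde\X$; descending this correlation to $\X$ via conditional expectation (and using that $\phi$ is already $\X$-measurable) closes the induction with the recursion $C(d) := C(d-1) + O(1)$.

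The main obstacle, and the reason one cannot take $k = d$ in general, is the cohomology step. In characteristic $\geq d$, the equation $\mder_h \Psi = \psi_h$ for a polynomial phase $\psi_h$ of degree $d-2$ admits a polynomial phase solution $\Psi$ of degree $d-1$ via an explicit Taylor-type formula; but in low characteristic this formula requires dividing by factorials that vanish in $\F$. One must instead allow $\Psi$ to be a \emph{non-polynomial} phase polynomial in the sense of Remark \ref{fail}, which inflates the degree by a $\charac(\F)$-dependent amount at each inductive step. Tracking this inflation over $d$ levels of induction yields a constant $C(d)$ that may be much larger than $d$, but still depends only on $d$. A secondary technical difficulty is to show that the characteristic factor for the $U^d$ seminorm is actually realized as an inverse limit of such phase-polynomial extensions, so that the cocycle one constructs is in fact integrable; this is where one uses the special structure of $\F^\omega$-actions (in particular, that every element is of finite order) to avoid the Mackey-type obstructions present in the $\Z$-action theory of Host--Kra.
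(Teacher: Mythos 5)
The paper does not prove Theorem \ref{ergmain-thm2}: it is imported verbatim as a black box from \cite[Corollary 1.27]{berg}, and the paper explicitly says so (``We will use Theorem \ref{ergmain-thm2} as a `black box' ''). The entire content of the present paper is the correspondence-principle machinery needed to deduce the finitary Theorem \ref{main2} from the ergodic Theorem \ref{ergmain-thm2}. So there is no proof in this paper against which to check your argument step by step; the relevant comparison is with \cite{berg}.

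At the level of strategy your sketch does track the Host--Kra-style argument that \cite{berg} uses: induct on $d$, observe from the recursive definition that $\|\mder_h\phi\|_{U^{d-1}}$ is bounded below on a positive-measure set of $h$, apply the inductive hypothesis to extract phase polynomials $\psi_h$ correlating with $\mder_h\phi$, try to integrate the family $(\psi_h)$ to a single higher-degree phase polynomial, and accept a degree increment bigger than one in low characteristic because the Taylor-type antidifferentiation formula forces division by factorials that vanish mod $p$. Your remark that the torsion in $\F^\omega$ is what tames the Mackey-type obstructions of the $\Z$-action theory is also the right intuition.

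The genuine gap is that the middle step --- ``one shows that $h\mapsto\psi_h$ is an approximate cocycle \dots\ one upgrades to an honest phase-polynomial-valued cocycle and then solves the cohomological equation'' --- is not a lemma, it is essentially the entire structure theory developed in \cite{berg}. Concretely: (a) the inductive hypothesis only gives, for each $h$, \emph{some} phase polynomial $\psi_h$ correlating with $\mder_h\phi$; it does not canonically identify $\psi_h$ (for instance, as a conditional expectation onto a characteristic factor), so the assignment $h\mapsto\psi_h$ has no reason to satisfy even an approximate cocycle equation without first developing the factor theory. Note also that your cocycle identity $\mder_{h+k}\phi = T_k(\mder_h\phi)\cdot\mder_k\phi$ requires $|\phi|=1$, so one must already have reduced to working with phase functions. (b) Passing from an approximate cocycle relation (an $L^1$ statement about a measurable function) to an exact one needs rigidity and discretisation arguments in the measurable category, not merely the finite-space versions in the appendix. (c) ``Solving the cohomological equation'' is precisely the assertion that the relevant obstruction vanishes, and establishing that requires first proving that the characteristic factor $Z_{d-1}(\X)$ is an Abramov system --- an inverse limit of towers of phase-polynomial extensions --- which is itself a separate induction carrying the bulk of the difficulty; you mention this only as a ``secondary technical difficulty,'' but it is in fact the main theorem. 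So the outline points in the right direction, but as written it substitutes the statement of the hard part for its proof.
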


\begin{remark} The ``if'' part of this theorem follows easily from van der Corput's lemma; the important part of the theorem for us is the ``only if'' part.  These results can be viewed as a finite field analogue of the results in \cite{host-kra} in high characteristic (and a partial analogue in the low characteristic case), and indeed draws heavily on the tools developed in that paper; see \cite{berg} for further discussion.  It is quite possible that $k$ can in fact be taken to equal $d$ in Theorem \ref{ergmain-thm2} (or equivalently, that the condition $\charac(\F) \geq d$ can be dropped in Theorem \ref{ergmain-thm}); this would imply Conjecture \ref{mainconj} in full generality.
\end{remark}

We will use Theorem \ref{ergmain-thm2} as a ``black box'', and it will be the primary ingredient in our proof of Theorem \ref{main2}, in much the same way that the Furstenberg recurrence theorem is the primary ingredient in Furstenberg's proof of Szemer\'edi's theorem in \cite{furst}.  Theorem \ref{ergmain-thm} plays a similar role for Theorem \ref{main}.

As with any other argument using a Furstenberg-type correspondence principle, our bounds are \emph{ineffective}, in that we do not obtain an explicit value of $\eps$ in terms of $d$ and $\delta$.  In principle, one could finitise the arguments in \cite{berg} (in the spirit of \cite{tao-quant}) to obtain such an explicit value, but this would be extremely tedious (and not entirely straightforward), and would lead to an extremely poor dependence (such as iterated tower-exponential or worse).  We will not pursue this matter here.

\subsection{Acknowledgments}

The first author is supported by a grant from the MacArthur Foundation, and by NSF grant CCF-0649473.  
The second author is supported by ISF grant 557/08,  by a Landau fellowship - supported by the Taub foundations, and by an Alon fellowship.
The authors are also greatly indebted to Ben Green for helpful conversations, and Vitaly Bergelson for encouragement.

\section{Notation}

We will rely heavily on asymptotic notation.  Given any parameters $x_1,\ldots,x_k$, we use $O_{x_1,\ldots,x_k}(X)$ to denote any quantity bounded in magnitude by $C_{x_1,\ldots,x_k} X$ for some finite quantity $C_{x_1,\ldots,x_k}$ depending only on $x_1,\ldots,x_k$.  We also write $Y \ll_{x_1,\ldots,x_k} X$ or $X \gg_{x_1,\ldots,x_k} Y$ for $Y = O_{x_1,\ldots,x_k}(X)$.  Furthermore, given an asymptotic parameter $n$ that can go to infinity, we use $o_{n \to \infty; x_1,\ldots,x_k}(X)$ to denote any quantity bounded in magnitude by $c_{x_1,\ldots,x_k}(n) X$, where $c_{x_1,\ldots,x_k}(n)$ is a quantity which goes to zero as $n \to \infty$ for fixed $x_1,\ldots,x_k$.  Thus for instance, if $r_2 > r_1 > 1$, then $\frac{\exp(r_1)}{\log r_2} = o_{r_2 \to \infty;r_1}(1)$.

\section{Statistical sampling}

It is well known that the ``global average'' $\E_{h \in V} f(h)$ of a bounded function $f: V \to \D$ can be accurately estimated (with high probability) by randomly selecting a number of points $x_1,\ldots,x_N \in V$ and computing the empirical Monte Carlo average (or ``local average'') $\E_{1 \leq n \leq N} f(x_n)$.  Indeed, it is not hard to show (by the second moment method) that with probability $o_{N \to \infty}(1)$, one has
$$ \E_{1 \leq n \leq N} f(x_n) = \E_{h \in V} f(h) + o_{N \to \infty}(1).$$
The point here is that the error term is uniform in the choice of $f$ and $V$.

We now record some variants of this standard ``random local averages approximate global averages'' fact, in which we perform more exotic empirical averages.  We begin with averages along random subspaces of $V$.

\begin{lemma}[Random sampling for integrals]\label{sampling-lem} Let $V$ be a finite-dimensional vector space, and let $f: V \to \D$ be a function.  Let $v_1,\ldots,v_m \in V$ be chosen independently at random.  Then with probability $1 - o_{m \to \infty}(1)$, we have
$$ \E_{\vec a \in \F^m} f(\vec a \cdot \vec v) = \E_{h \in V} f(h) + o_{m \to \infty}(1)$$
where $\vec v := (v_1,\ldots,v_m)$,  and $\vec a  \cdot \vec v := a_1 v_1 + \ldots + a_m v_m$.
\end{lemma}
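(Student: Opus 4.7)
The plan is to establish the lemma by the standard second moment method (Chebyshev's inequality). Fix $f: V \to \D$ and write
$$X(\vec v) := \E_{\vec a \in \F^m} f(\vec a \cdot \vec v)$$
for the random variable in question. The goal is to show $\E_{\vec v} X = \E_V f + O(|\F|^{-m})$ and $\mathrm{Var}(X) = O(|\F|^{1-m})$, after which Chebyshev immediately gives the claim with, say, error $|\F|^{-m/4}$.

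For the first moment, I would swap expectations and note that for each fixed $\vec a$ the quantity $\vec a \cdot \vec v = \sum_i a_i v_i$ is a linear image of the uniform random vector $\vec v \in V^m$. If $\vec a \ne 0$, some coordinate $a_j$ is a nonzero element of the field $\F$, so multiplication by $a_j$ is a bijection of $V$, and $a_j v_j$ (hence the full sum $\vec a \cdot \vec v$) is uniform on $V$. Thus $\E_{\vec v} f(\vec a \cdot \vec v) = \E_V f$ for every $\vec a \ne 0$, and only the single term $\vec a = 0$ contributes a discrepancy, of size at most $2|\F|^{-m}$.

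The main step is the second moment. I would expand
$$\E_{\vec v} |X|^2 = \E_{\vec a, \vec b \in \F^m} \E_{\vec v}\bigl[ f(\vec a \cdot \vec v) \overline{f(\vec b \cdot \vec v)} \bigr]$$
and split according to whether $\vec a, \vec b$ are linearly dependent or independent in $\F^m$. The key point is that when $\vec a, \vec b$ are linearly independent, the $\F$-linear map $V^m \to V \times V$ sending $\vec v \mapsto (\vec a \cdot \vec v, \vec b \cdot \vec v)$ is surjective (extend $\vec a, \vec b$ to a basis of $\F^m$), so it pushes the uniform measure on $V^m$ forward to the uniform measure on $V \times V$. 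Consequently the inner expectation equals $|\E_V f|^2$ on the linearly independent stratum. A direct count shows that the set of linearly dependent ordered pairs $(\vec a, \vec b) \in \F^m \times \F^m$ has density $O(|\F|^{1-m})$, so by the trivial bound $|f| \le 1$ these pairs contribute at most $O(|\F|^{1-m})$ to the second moment. Combining,
$$\E_{\vec v} |X|^2 = |\E_V f|^2 + O(|\F|^{1-m}),$$
and together with the first-moment computation this yields $\mathrm{Var}(X) = O(|\F|^{1-m})$.

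The only real subtlety is the joint-uniformity claim for linearly independent $\vec a, \vec b$, which is where the field structure of $\F$ (not merely that of an abelian group) is used; everything else is routine. Chebyshev then gives
$$\P_{\vec v}\bigl( |X - \E_V f| > |\F|^{-m/4} \bigr) = O(|\F|^{1/2 - m/2}) = o_{m \to \infty}(1),$$
which is the desired conclusion.
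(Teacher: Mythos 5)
Your proof is correct and uses essentially the same second moment / Chebyshev argument as the paper: compute $\E X$ (with the only discrepancy coming from $\vec a = 0$), expand $\E|X|^2$ over pairs $(\vec a,\vec b)$, observe that for independent pairs the pushforward is uniform on $V\times V$, and bound the contribution of the $O(|\F|^{m+1})$ dependent pairs. If anything, your formulation in terms of linear dependence is slightly cleaner than the paper's ``unless $\vec a = c\vec b$'' phrasing, since it correctly also excludes the degenerate case $\vec b = 0$, $\vec a \neq 0$.
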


\begin{remark} One can easily make the $o_{m \to \infty}(1)$ terms more explicit, but we will not need to do so here.
\end{remark}

\begin{proof} 
We use the second moment method. Note that
$$\E \E_{\vec a \in \F^m} f(\vec a \cdot \vec v) = \E_{h \in V} f(h) + o_{m \to \infty}(1)$$
(the $o_{m \to \infty}(1)$ error arising from the $a=0$ contribution) so by Chebyshev's inequality it suffices to show that
$$ \E | \E_{\vec a \in \F^m} f(\vec a \cdot \vec v) |^2 = |\E_{h \in V} f(h)|^2 + o_{m \to \infty}(1).$$
The left-hand side can be rearranged as
$$ \E_{\vec a, \vec b \in \F^m} 
\E f(\vec a \cdot \vec v) \bar f  (\vec b \cdot \vec v).$$
It is easy to see that the inner expectation is $|\E_{h \in V} f(h)|^2$ unless $\vec a = c\vec b$,  for some $c \in \F$ in which case it is $O(1)$.  The claim follows.
\end{proof}

In the above lemma, $f$ was deterministic and thus independent of the $v_i$.  But we can easily extend the result to the case where $f$ depends on a bounded number of the $v_i$:

\begin{corollary}[Random sampling for integrals, II]\label{sampling-2} Let $V$ be a finite-dimensional vector space, let $m \geq m_0 \geq 0$, let $v_1,\ldots,v_m \in V$ be chosen independently at random, and let $f_{v_1,\ldots,v_{m_0}}: V \to \D$ be a function that depends on $v_1,\ldots,v_{m_0}$ but is independent of $v_{m_0+1},\ldots,v_m$.
Then with probability $1 - o_{m \to \infty; m_0}(1)$, we have
$$ \E_{\vec a \in \F^m} f_{v_1,\ldots,v_{m_0}}(\vec a \cdot \vec v) = \E_{h \in V} f_{v_1,\ldots,v_{m_0}}(h) + o_{m \to \infty; m_0}(1).$$
\end{corollary}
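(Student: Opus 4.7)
The plan is to reduce Corollary \ref{sampling-2} to Lemma \ref{sampling-lem} by conditioning on $v_1,\ldots,v_{m_0}$, so that the function $f_{v_1,\ldots,v_{m_0}}$ becomes deterministic, and then applying the lemma to the remaining random vectors $v_{m_0+1},\ldots,v_m$. The technical twist is that the sum $\vec a \cdot \vec v = \sum_{i=1}^m a_i v_i$ mixes the ``deterministic'' coordinates $a_1,\ldots,a_{m_0}$ with the ``random'' coordinates $a_{m_0+1},\ldots,a_m$, but this is harmless because we can split off the first $m_0$ terms as a translation and use the translation-invariance of $\E_{h\in V}$.

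More precisely, I would condition on $v_1,\ldots,v_{m_0}$ (writing $f:=f_{v_1,\ldots,v_{m_0}}$) and write
$$\E_{\vec a\in \F^m}f(\vec a\cdot \vec v)=\E_{(a_1,\ldots,a_{m_0})\in \F^{m_0}}\;\E_{(a_{m_0+1},\ldots,a_m)\in \F^{m-m_0}}f\bigl(w+\textstyle\sum_{i=m_0+1}^m a_i v_i\bigr),$$
where $w:=\sum_{i=1}^{m_0}a_i v_i$ is determined by the conditioning and the choice of $(a_1,\ldots,a_{m_0})$. For each such choice I define the translated function $g_w(h):=f(w+h)$, which is deterministic and takes values in $\D$, and I observe that the inner expectation equals $\E_{\vec a'\in \F^{m-m_0}}g_w(\vec a'\cdot \vec v')$, where $\vec v':=(v_{m_0+1},\ldots,v_m)$. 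Applying Lemma \ref{sampling-lem} to $g_w$ with $m-m_0$ random vectors yields, with probability $1-o_{m\to\infty}(1)$ over $\vec v'$, the estimate
$$\E_{\vec a'\in \F^{m-m_0}}g_w(\vec a'\cdot \vec v')=\E_{h\in V}g_w(h)+o_{m\to\infty}(1)=\E_{h\in V}f(h)+o_{m\to\infty}(1),$$
using the translation invariance $\E_h f(w+h)=\E_h f(h)$.

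Next I would union bound over the $|\F|^{m_0}=O_{m_0}(1)$ choices of $(a_1,\ldots,a_{m_0})\in \F^{m_0}$, which merely worsens the failure probability by a factor depending only on $m_0$, giving probability at least $1-o_{m\to\infty;m_0}(1)$ that the approximation above holds simultaneously for every such $(a_1,\ldots,a_{m_0})$. Averaging over $(a_1,\ldots,a_{m_0})$ in $\F^{m_0}$ then yields the desired equality up to an $o_{m\to\infty;m_0}(1)$ error, still conditional on $v_1,\ldots,v_{m_0}$. Since the conclusion is uniform in the conditioning, this establishes the corollary. I do not expect any serious obstacle here: the only substantive point is bookkeeping the dependence of the error term on $m_0$ through the union bound, and ensuring that the translation $g_w$ preserves both the $\D$-valued hypothesis and the global mean of $f$, both of which are immediate.
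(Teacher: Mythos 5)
Your proposal is correct and follows essentially the same approach as the paper: condition on $v_1,\ldots,v_{m_0}$, split $\vec a\cdot\vec v$ into the fixed part $w=\vec a_0\cdot\vec v_0$ plus the part involving $v_{m_0+1},\ldots,v_m$, and apply Lemma~\ref{sampling-lem} for each fixed $\vec a_0$. The only cosmetic difference is that you combine the $|\F|^{m_0}=O_{m_0}(1)$ choices of $\vec a_0$ by a union bound, whereas the paper passes to an $L^1$ estimate, averages over $\vec a_0$ by the triangle inequality, and concludes via Markov's inequality.
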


\begin{proof}  We write $\vec a = (\vec a_0, \vec a_1) \in \F^{m_0} \times \F^{m-m_0}$ and $\vec v = (\vec v_0, \vec v_1) \in V^{m_0} \times V^{m-m_0}$.  If we condition $\vec v_0 = (v_1,\ldots,v_{m_0})$ to be fixed, we see from applying Lemma \ref{sampling-lem} to the remaining random vectors $\vec v_1$ that for fixed $\vec a_0$, we have
$$ \E_{\vec a_1 \in \F^{m-m_0}} f_{v_1,\ldots,v_{m_0}}(\vec a \cdot \vec v) = \E_{h \in V} f_{v_1,\ldots,v_{m_0}}(\vec a_0 \cdot \vec v_0 + h) + o_{m-m_0 \to \infty}(1)$$
with probability $1 - o_{m -m_0 \to \infty}(1)$ conditioning on $\vec v_0$; integrating this we see that the same is true without the conditioning.  We can shift $h$ by $\vec a_0 \cdot \vec v_0$, move the $h$ average onto the other side, and take expectations to conclude that
$$ \E |\E_{\vec a_1 \in \F^{m-m_0}} f_{v_1,\ldots,v_{m_0}}(\vec a \cdot \vec v) - \E_{h \in V} f_{v_1,\ldots,v_{m_0}}(h)| = o_{m-m_0 \to \infty}(1)$$
for each $\vec a_0$; averaging over $\vec a_0$ by the triangle inequality we obtain the claim.
\end{proof}

\begin{remark}\label{cyclic} It is with this corollary that we are implicitly exploiting the highly transitive nature of the symmetry group $GL(V)$ available to us.  In the setting of the cyclic group $\Z/N\Z$, the analogue of Lemma \ref{sampling-lem} is still true, namely that one can approximate a global average $\int_{\Z/N\Z} f$ by a local average on random arithmetic progressions of medium length, but this approximation no longer holds if $f$ is allowed to depend on the first few values of that progression, since this of course determines the rest of the progression; this is related to the fact that (for $N$ prime, say), the affine group of $\Z/N\Z$ (which is analogous to $GL(V)$) is $2$-transitive but no stronger.  In contrast, in the finite field setting, a small subspace of a medium-dimensional subspace does not determine the whole subspace.
\end{remark}

We will need to generalise these results further by considering more exotic averages along cubes.  A typical result we will need can be stated informally as
\begin{equation}\label{cute}
\begin{split}
&\E_{\vec a_2 \in \F^{m_2}} \E_{\vec a_1 \in \F^{m_1}} 
\int_V f (T_{\vec a_1 \cdot \vec v_1} \bar{f}) (T_{\vec a_2 \cdot \vec v_2} \bar{f}) (T_{\vec a_1 \cdot \vec v_1 + \vec a_2 \cdot \vec v_2} f) \\
&\approx \E_{h_1, h_2 \in V} \int_V f (T_{h_1} \bar{f}) (T_{h_2} \bar{f}) T_{h_1+h_2} f 
\end{split}
\end{equation}
when $m_1$ is large, $m_2$ is large compared with $m_1$, and $\vec v$ is random (see Lemma \ref{gow-lem} for the formal version of this type of estimate).  Such results follow (heuristically, at least), by iterating the previous results.  For instance, from Corollary \ref{sampling-2} we heuristically have
\begin{align*}
&\E_{\vec a_2 \in \F^{m_2}} \E_{\vec a_1 \in \F^{m_1}} 
\int_V f (T_{\vec a_1 \cdot \vec v_1} \bar{f}) (T_{\vec a_2 \cdot \vec v_2} \bar{f}) (T_{\vec a_1 \cdot \vec v_1 + \vec a_2 \cdot \vec v_2} f)(x) \\
&\quad \approx \E_{h_2 \in V} \E_{\vec a_1 \in \F^{m_1}} 
\int_V f (T_{\vec a_1 \cdot \vec v_1} \bar{f}) (T_{h_2} \bar{f}) (T_{\vec a_1 \cdot \vec v_1 + h_2} f)
\end{align*}
when $m_2$ is large compared to $m_1$ and then interchanging the expectations and applying Lemma \ref{sampling-lem} heuristically yields
\begin{align*}
&\E_{h_2 \in V} \E_{\vec a_1 \in \F^{m_1}} 
\int_V f (T_{\vec a_1 \cdot \vec v_1} \bar{f}) (T_{h_2} \bar{f}) (T_{\vec a_1 \cdot \vec v_1 + h_2} f) \\
&\quad \approx \E_{h_1 \in V} \E_{h_2 \in V} 
\int_V f (T_{h_1} \bar{f}) (T_{h_2} \bar{f}) (T_{h_1 + h_2} f) 
\end{align*}
when $m_1$ is large, thus giving \eqref{cute}.  

We will formalise the precise statement along these lines that we need later in this section.  We begin with some key definitions.

\begin{definition}[Lipschitz norm]  If $G: \D^n \to \C$ is a function on a polydisk $\D^n$, we define the \emph{Lipschitz norm} $\|G\|_{\Lip}$ of $G$ to be the quantity
$$ \|G\|_{\Lip} := \sup_{z \in \D^n} |G(z)| + \sup_{z,w \in \D^n: z \neq w} \frac{|G(z)-G(w)|}{d(z,w)}$$
where we use the metric
$$ d( (z_1,\ldots,z_n), (w_1,\ldots,w_n) ) := |z_1-w_1| + \ldots + |z_n-w_n|.$$
\end{definition}

\begin{definition}[Accurate sampling sequence]\label{ass}  Let $k \geq 1$, let $V$ be a finite-dimensional vector space, let $f: V \to \D$ be a bounded function, and let
$$ 0= H_0 < H_1 < H_2 < H_3 < \ldots$$
be a sequence of integers (or ``scales'').  We define an \emph{accurate sampling sequence} for $f$ of degree $k$ and at scales $H_1,H_2,\ldots$ to be an infinite sequence of vectors
$$ v_1, v_2, v_3, \ldots \in V$$
such that for every sequence
$$ 0 \leq r_0 < r_1 < \ldots < r_k$$
of scales and every Lipschitz function $G: \D^{\{0,1\}^k \times \F^{H_{r_0}}} \to \C$, we have
\begin{equation}\label{intv}
 \int_V | G_{f,r_0,r_1,\ldots,r_k} - G_{f,r_0} | \leq \frac{\|G\|_{\Lip}}{r_1} 
\end{equation}
where 
$$ G_{f,r_0,r_1,\ldots,r_k}(x) := \E_{\vec a_1 \in \F^{H_{r_1}}, \ldots, \vec a_k \in \F^{H_{r_k}}} 
G( (f(x+{\bf \omega } \cdot {\bf u} + \vec b \cdot \vec v_0))_{{\bf \omega} \in \{0,1\}^k, \vec b \in \F^{H_{r_0}}} ),$$
where 
\[
{\bf u} = (\vec a_1 \cdot \vec v_1,\ldots ,\vec a_k \cdot \vec v_k) ;  \quad \vec v_j=(v_1,\ldots, v_{H_{r_j}}), \ j=0,\ldots, k,
\]
and 
$$ G_{f,r_0}(x) := \E_{h_1 \in V, \ldots, h_k \in V} 
G( (f(x+{\bf \omega}\cdot  {\bf h}  + \vec b \cdot \vec v_0))_{{\bf \omega} \in \{0,1\}^k, \vec b \in \F^{H_{r_0}}} ),$$
where ${\bf h} =(h_1,\ldots,h_k) $.
\end{definition}

\begin{remark}  The denominator $r_1$ in \eqref{intv} could be replaced by any other fixed function of $r_1$ that went to infinity as $r_1 \to \infty$ if desired here.
\end{remark}

\begin{remark}\label{descent}
We make the trivial but useful remark that an accurate sampling sequence of degree $k$ is also an accurate sampling sequence of degree $k'$ for any $1 \leq k' \leq k$.  Indeed, to verify \eqref{intv} for a function $G': \D^{\{0,1\}^{k'} \times \F^{H_{r_0}}} \to \D$ and some scales $r_{k'} > \ldots > r_0 \geq 0$, one simply adds some dummy scales $r_{k'+1},\ldots,r_k$ above $r_{k'}$ and extends $G'$ to a function $G: \D^{\{0,1\}^k \times \F^{H_{r_0}}} \to \D$ by composing with the obvious restriction map from $\D^{\{0,1\}^k \times \F^{H_{r_0}}}$ to $\D^{\{0,1\}^{k'} \times \F^{H_{r_0}}}$.
\end{remark}

Roughly speaking, an accurate sampling sequence will allow us to estimate all the global averages that we need for the combinatorial inverse conjecture for the Gowers norm by local averages which are suitable for lifting to the ergodic setting via the correspondence principle.  We illustrate the use of such sequences by describing the three special cases of \eqref{intv} that we will actually need in our arguments.

\begin{lemma}[Global Gowers norm can be approximated by local Gowers norm]\label{gow-lem} 
Let $d \geq 1$, let $V$ be a finite-dimensional vector space, let $f: V \to \D$ be a bounded function, and let $v_1, v_2, \ldots \in V$ be an accurate sampling sequence for $f$ of degree $d$ and at scales $H_1, H_2, \ldots$.  Then for every sequence of scales
$$ 0 < r_1 < r_2 < \ldots < r_d$$
we have
$$ \E_{\vec a_1 \in \F^{H_{r_1}}, \ldots, \vec a_d \in \F^{H_{r_d}}} \int_V \mder_{\vec a_1 \cdot \vec v_{r_1}} \ldots \mder_{\vec a_d \cdot \vec v_{r_d}} f = \| f \|_{U^d(V)}^{2^d} + o_{r_1 \to \infty; d}(1).$$
\end{lemma}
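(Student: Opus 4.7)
The plan is to apply the accurate sampling property directly, with $k=d$ and $r_0 = 0$, to the specific function $G$ whose average computes the $d$-th order multiplicative derivative.

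First I would define $G : \D^{\{0,1\}^d} \to \C$ by
\[
G\bigl((z_\omega)_{\omega \in \{0,1\}^d}\bigr) := \prod_{\omega \in \{0,1\}^d} \mathcal{C}^{|\omega|} z_\omega,
\]
where $\mathcal{C}$ denotes complex conjugation and $|\omega| = \omega_1 + \cdots + \omega_d$. Since each $z_\omega$ lies in the unit disk, a standard estimate on partial derivatives of a product of bounded factors shows $\|G\|_{\Lip} = O_d(1)$. Note also that since $H_0 = 0$, the factor space $\F^{H_0}$ is trivial, so the ``$\vec b$-average'' in Definition \ref{ass} disappears entirely and one may safely take $r_0 = 0$.

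Second I would unwind the two quantities appearing in \eqref{intv} for this $G$. With $r_0 = 0$, and writing ${\bf u} = (\vec a_1 \cdot \vec v_{r_1}, \ldots, \vec a_d \cdot \vec v_{r_d})$, a direct computation gives
\[
\int_V G_{f,0,r_1,\ldots,r_d} \;=\; \E_{\vec a_1 \in \F^{H_{r_1}}, \ldots, \vec a_d \in \F^{H_{r_d}}} \int_V \prod_{\omega \in \{0,1\}^d} \mathcal{C}^{|\omega|} f(x + \omega \cdot {\bf u})\, dx,
\]
which, after grouping the product into telescoping multiplicative derivatives, equals exactly the left-hand side of the claimed identity. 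Similarly,
\[
\int_V G_{f,0} \;=\; \E_{h_1,\ldots,h_d \in V} \int_V \prod_{\omega \in \{0,1\}^d} \mathcal{C}^{|\omega|} f(x + \omega \cdot {\bf h})\, dx \;=\; \|f\|_{U^d(V)}^{2^d}
\]
by the definition of the Gowers norm.

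Finally I would apply the accurate sampling inequality \eqref{intv} with this choice of $G$, $k=d$, and $r_0 = 0$. This gives
\[
\left| \int_V G_{f,0,r_1,\ldots,r_d} \;-\; \int_V G_{f,0} \right| \;\leq\; \int_V |G_{f,0,r_1,\ldots,r_d} - G_{f,0}| \;\leq\; \frac{\|G\|_{\Lip}}{r_1} \;=\; O_d(1/r_1),
\]
which is precisely the desired $o_{r_1 \to \infty; d}(1)$ error. No genuine obstacle is expected here: the lemma is essentially a bookkeeping reformulation of the definition of accurate sampling sequence, and the only mildly delicate step is choosing $G$ so that it encodes the alternating-conjugate Gowers product while having a Lipschitz norm depending only on $d$ (and checking that $r_0 = 0$ is permissible, which it is under the convention $H_0 = 0$).
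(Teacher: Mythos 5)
Your proposal matches the paper's own proof essentially verbatim: apply \eqref{intv} with $r_0=0$ and the alternating-conjugate product function $G$, observe that $G_{f,0,r_1,\ldots,r_d}$ and $\int_V G_{f,0}$ reproduce the local average and the Gowers norm respectively, bound $\|G\|_{\Lip} = O_d(1)$, and conclude by the triangle inequality. No differences worth noting.
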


\begin{remark}
As with all other estimates in this section, the point is that the error term is uniform over all choices of $f$ and $V$.  Note that the $d=2$ case of this lemma is a formalisation of \eqref{cute}.
\end{remark}

\begin{proof} We apply \eqref{intv} with $r_0=0$, and $G: \D^{\{0,1\}^d} \to \C$ being the function
$$ G\left( (z({\bf \omega}))_{{\bf \omega} \in \{0,1\}^d} \right) := \prod_{{\bf \omega} \in \{0,1\}^d} {\mathcal C}^{\omega_1 + \ldots + \omega_d} z({\bf \omega})$$
where ${\mathcal C}: z \mapsto \overline{z}$ is the complex conjugation operator.
A routine computation gives the identities
$$ G_{f,0,r_1,\ldots,r_d}(x) = \E_{\vec a_1 \in \F^{H_{r_1}}, \ldots, \vec a_d \in \F^{H_{r_d}}} \mder_{\vec a_1 \cdot \vec v_{r_1}} \ldots \mder_{\vec a_d \cdot \vec v_{r_d}} f$$
and
$$ \int_V G_{f,0} = \| f \|_{U^d(V)}^{2^d}.$$
Also, it is easy to see that the Lipschitz norm $\|G\|_{\Lip}$ is $O_d(1)$.  The claim now follows immediately from \eqref{intv} and the triangle inequality.
\end{proof}

\begin{lemma}[Global averages can be approximated by local averages]\label{avg-lem} 
Let $V$ be a finite-dimensional vector space, let $f: V \to \D$ be a bounded function, and let $v_1, v_2, \ldots \in V$ be an accurate sampling sequence for $f$ of degree $1$ and at scales $H_1, H_2, \ldots$.  Then for every finite sequence $\vec b_1,\ldots,\vec b_m \in \F^\omega$ and
every continuous function $F: \D^m \to \C$, we have
$$ \int_V |\E_{\vec a \in \F^{H_r}} T_{\vec a \cdot \vec v} g - \int_V g| = o_{r \to \infty; F, m, \vec b_1,\ldots,\vec b_m}(1)$$
where $g: V \to \C$ is the function
\begin{equation}\label{gdef-eq} g(x) := F( T_{\vec b_1 \cdot \vec v} f(x), \ldots, T_{\vec b_m \cdot \vec v} f(x) ).
\end{equation}
\end{lemma}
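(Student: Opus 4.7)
The strategy is to reduce the lemma to a single application of the defining inequality \eqref{intv} at degree $k=1$, choosing the free parameters cleverly. Since each $\vec b_i \in \F^\omega$ has finite support, we may fix $r_0 = r_0(m, \vec b_1, \ldots, \vec b_m)$ so large that $\vec b_i \in \F^{H_{r_0}}$ for every $i$; in particular $\vec b_i \cdot \vec v = \vec b_i \cdot \vec v_0$. We shall apply \eqref{intv} with this $r_0$ and with $r_1 = r$ for each $r > r_0$.

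The key step is to encode $g$ as a Lipschitz function evaluated on the ``cube data''. Assume momentarily that $F$ itself is Lipschitz, and define $G: \D^{\{0,1\} \times \F^{H_{r_0}}} \to \C$ by
$$ G\bigl((z(\omega, \vec b))_{\omega \in \{0,1\}, \vec b \in \F^{H_{r_0}}}\bigr) := F\bigl(z(1, \vec b_1), \ldots, z(1, \vec b_m)\bigr), $$
so $G$ ignores the $\omega = 0$ coordinates and reads off exactly the $m$ entries with $\omega = 1$ and $\vec b = \vec b_i$. One clearly has $\|G\|_{\Lip} \le C(F, m)$. Unpacking the definitions in Definition \ref{ass}, the $\omega = 1$ slot contributes $f(x + \vec a \cdot \vec v_r + \vec b_i \cdot \vec v_0) = f(x + \vec a \cdot \vec v + \vec b_i \cdot \vec v)$, so
$$ G_{f, r_0, r_1}(x) = \E_{\vec a \in \F^{H_r}} g(x + \vec a \cdot \vec v) = \E_{\vec a \in \F^{H_r}} T_{\vec a \cdot \vec v} g(x), $$
and likewise, using the translation invariance of the uniform average on $V$,
$$ G_{f, r_0}(x) = \E_{h_1 \in V} g(x + h_1) = \int_V g. $$
Substituting into \eqref{intv} yields
$$ \int_V \bigl|\E_{\vec a \in \F^{H_r}} T_{\vec a \cdot \vec v} g - \textstyle\int_V g\bigr| \le \tfrac{\|G\|_{\Lip}}{r}, $$
which is $o_{r \to \infty; F, m, \vec b_1, \ldots, \vec b_m}(1)$ as desired.

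To remove the Lipschitz hypothesis on $F$, uniformly approximate the continuous function $F$ on the compact polydisk $\D^m$ by a Lipschitz function $\tilde F$ with $\|F - \tilde F\|_\infty \le \eta$. Let $\tilde g$ be defined as in \eqref{gdef-eq} but with $\tilde F$ in place of $F$; then $\|g - \tilde g\|_\infty \le \eta$, so both $\E_{\vec a \in \F^{H_r}} T_{\vec a \cdot \vec v} g(x)$ and $\int_V g$ change by at most $\eta$ when $g$ is replaced by $\tilde g$. Applying the Lipschitz case to $\tilde F$ and combining with the triangle inequality gives
$$ \int_V \bigl|\E_{\vec a \in \F^{H_r}} T_{\vec a \cdot \vec v} g - \textstyle\int_V g\bigr| \le \tfrac{C(\tilde F, m)}{r} + 2\eta. $$
Choosing $\eta$ tending to $0$ sufficiently slowly as $r \to \infty$ (with rate depending on $F, m$, and the $\vec b_i$) proves the claim.

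The argument is essentially just a careful bookkeeping of the definition of an accurate sampling sequence: the only point that requires any thought is matching the abstract expressions $G_{f, r_0, r_1}$ and $G_{f, r_0}$ to the concrete averages in the statement, which in turn dictates the correct choice of $G$ and of $r_0$. I do not anticipate a substantive obstacle — the main value of the lemma lies not in its proof but in the fact that, once one is handed an accurate sampling sequence, global averages of composite quantities like $g$ can be cleanly replaced by local subspace averages suitable for the correspondence principle later on.
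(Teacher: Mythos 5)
Your proposal is correct and follows essentially the same route as the paper's own proof: reduce to Lipschitz $F$ by uniform approximation, pick $r_0$ large enough that all $\vec b_i$ live in $\F^{H_{r_0}}$, encode $g$ via a Lipschitz $G$ on $\D^{\{0,1\}\times\F^{H_{r_0}}}$ that reads off only the $\omega=1$ coordinates, and then apply the accurate sampling inequality \eqref{intv} with $k=1$, $r_1=r$. The only cosmetic difference is that the paper first enlarges $\{\vec b_1,\ldots,\vec b_m\}$ to all of $\F^{H_{r_0}}$ by adding dummy vectors before defining $G$, whereas you build $G$ directly by composing $F$ with a projection onto the $m$ relevant slots; both are equivalent, and your version is if anything slightly more direct.
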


\begin{proof} By approximating the continuous function $F$ uniformly by a Lipschitz function, we may assume that $F$ is Lipschitz.  By adding dummy vectors to the collection $\vec b_1,\ldots,\vec b_m$ if necessary, we may assume that $\{\vec b_1,\ldots,\vec b_m\} = \F^{H_{r_0}}$ for some $r_0 > 0$ depending on $\vec b_1,\ldots,\vec b_m$, thus $F$ is now a Lipschitz function from $\D^{\F^{H_{r_0}}}$ to $\C$.  

Note that to prove the claim we may without loss of generality restrict to the regime $r > r_0$.
We now apply \eqref{intv} with $G: \D^{\{0,1\} \times \F^{H_{r_0}}} \to \C$ being the function
$$ G\left( (z(\omega,\vec b))_{\omega \in \{0,1\}, \vec b \in \F^{H_{r_0}}} \right) := F\left( (z(1,\vec b))_{\vec b \in \F^{H_{r_0}}} \right).$$
A routine computation gives the identities
$$ G_{f,r_0,r}(x) = \E_{\vec a \in \F^{H_r}} T_{\vec a \cdot \vec v} g(x)$$
and
$$ G_{f,r_0}(x) = \E_{h \in V} T_h g(x) = \int_V g.$$
Also, it is clear that $G$ is Lipschitz with norm $O_{F, r_0}(1)$.  The claim then follows from \eqref{intv}.
\end{proof}

\begin{lemma}[Global polynomiality test can be approximated by local polynomiality test]\label{test-lem} 
Let $k \geq 1$, let $V$ be a finite-dimensional vector space, let $f: V \to \D$ be a bounded function, and let $v_1, v_2, \ldots \in V$ be an accurate sampling sequence for $f$ of degree $k$ and at scales $H_1, H_2, \ldots$.  Then for every finite sequence $\vec b_1,\ldots,\vec b_m \in \F^\omega$ and every continuous function $F: \D^m \to \C$, we have
\begin{align*}
&\E_{\vec a_1 \in \F^{H_{r_1}}} \ldots \E_{\vec a_k \in \F^{H_{r_k}}} 
\int_V |\mder_{\vec a_1 \cdot \vec v} \ldots \mder_{\vec a_k \cdot \vec v} g - 1| \\
&\quad =
\E_{h_1,\ldots,h_k \in V}
\int_V |\mder_{h_1} \ldots \mder_{h_k} g - 1| + o_{r_1 \to 0; F, m, \vec b_1,\ldots, \vec b_m, k}(1)
\end{align*}
for any $1 \leq r_1 < r_2 < \ldots < r_k$, where $g: V \to \C$ is the function defined by \eqref{gdef-eq}.
\end{lemma}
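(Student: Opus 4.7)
The plan is to reduce this to a direct application of \eqref{intv} by choosing an appropriate test function $G$, mimicking the strategy already used in Lemmas \ref{gow-lem} and \ref{avg-lem}. First, by uniform approximation on the compact polydisk $\D^m$ (e.g.\ by convolution with a smooth bump, or Stone--Weierstrass), we may assume that $F$ is Lipschitz, accepting a uniform error that can be made arbitrarily small independently of $f$, $V$, and $\vec v$. Next, exactly as in the proof of Lemma \ref{avg-lem}, we extend $\{\vec b_1,\ldots,\vec b_m\}$ by dummy vectors to the full set $\F^{H_{r_0}}$ for some $r_0 \geq 0$ depending on $\vec b_1,\ldots,\vec b_m$, and reinterpret $F$ as a Lipschitz function on $\D^{\F^{H_{r_0}}}$ that ignores the extra coordinates. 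We may assume $r_1 > r_0$.

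The key step is to define the function $G: \D^{\{0,1\}^k \times \F^{H_{r_0}}} \to \C$ by
\[
G\bigl((z(\omega,\vec b))_{\omega \in \{0,1\}^k,\, \vec b \in \F^{H_{r_0}}}\bigr)
:= \Bigl| \prod_{\omega \in \{0,1\}^k} {\mathcal C}^{|\omega|}
F\bigl((z(\omega,\vec b))_{\vec b \in \F^{H_{r_0}}}\bigr) - 1 \Bigr|,
\]
where $|\omega| = \omega_1 + \ldots + \omega_k$ and ${\mathcal C}$ is complex conjugation. Substituting $z(\omega,\vec b) = f(x + \omega \cdot {\bf u} + \vec b \cdot \vec v_0)$ and using $g(y) = F((f(y + \vec b \cdot \vec v_0))_{\vec b})$, a routine unfolding of the definition of the multiplicative derivative gives the two identities
\[
G_{f,r_0,r_1,\ldots,r_k}(x) = \E_{\vec a_1 \in \F^{H_{r_1}}, \ldots, \vec a_k \in \F^{H_{r_k}}}
\bigl| \mder_{\vec a_1 \cdot \vec v} \ldots \mder_{\vec a_k \cdot \vec v} g(x) - 1 \bigr|,
\]
\[
G_{f,r_0}(x) = \E_{h_1,\ldots,h_k \in V}
\bigl| \mder_{h_1} \ldots \mder_{h_k} g(x) - 1 \bigr|.
\]
Integrating in $x$ (and applying Fubini on the right-hand sides) produces the two averages that appear in the statement.

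Next, we verify that $G$ has Lipschitz norm $O_{F,r_0,k}(1)$: the outer $|\cdot - 1|$ is $1$-Lipschitz, the product over $\{0,1\}^k$ of quantities in $\D$ is Lipschitz in each factor with constant $1$, and each factor $F((z(\omega,\vec b))_{\vec b})$ is Lipschitz in the relevant coordinates with constant $O_F(1)$ (the dependence on $r_0$ is absorbed because only $|\F|^{H_{r_0}}$ coordinates are relevant in our metric). With this bound, \eqref{intv} applied to $G$ yields
\[
\int_V \bigl| G_{f,r_0,r_1,\ldots,r_k} - G_{f,r_0} \bigr|
\leq \frac{\|G\|_{\Lip}}{r_1} = o_{r_1 \to \infty; F, m, \vec b_1,\ldots,\vec b_m, k}(1),
\]
and combining this with the two integrated identities via the triangle inequality gives the claim, after adding back the small uniform error from the Lipschitz approximation of $F$.

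There is no real obstacle: the argument is structurally identical to Lemmas \ref{gow-lem} and \ref{avg-lem}, with the only mild subtlety being the need to build both the phase-polynomial test $|\prod_\omega {\mathcal C}^{|\omega|}(\cdot) - 1|$ and the composition with $F$ into a single Lipschitz function on a finite-dimensional polydisk so that \eqref{intv} can be applied directly; the notational bookkeeping of the index set $\{0,1\}^k \times \F^{H_{r_0}}$ is the only thing requiring care.
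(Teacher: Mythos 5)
Your proposal is correct and follows essentially the same approach as the paper: reduce to $F$ Lipschitz and $\{\vec b_1,\ldots,\vec b_m\} = \F^{H_{r_0}}$, define the same test function $G((z(\omega,\vec b))) = |\prod_\omega {\mathcal C}^{|\omega|} F((z(\omega,\vec b))_{\vec b}) - 1|$, verify the two identities for $G_{f,r_0,r_1,\ldots,r_k}$ and $G_{f,r_0}$, bound $\|G\|_{\Lip} = O_{F,r_0,k}(1)$, and apply \eqref{intv}. The paper treats the Lipschitz bound and the identities as routine; you supply slightly more detail (e.g. the factor-by-factor Lipschitz check), but the argument is structurally identical.
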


\begin{proof} Arguing as in Lemma \ref{avg-lem}, we may assume that $\{\vec b_1,\ldots,\vec b_m \} = \F^{H_{r_0}}$ for some $r_0 > 0$ depending on $\vec b_1,\ldots,\vec b_m$, and that $F: \D^{\F^{H_{r_0}}} \to \C$ is Lipschitz.

Note that to prove the claim we may without loss of generality restrict to the regime $r_1 > r_0$.
We now apply \eqref{intv} with $G: \D^{\{0,1\}^k \times \F^{H_{r_0}}} \to \C$ being the function
$$ G\left( (z({\bf \omega},\vec b))_{{\bf \omega} \in \{0,1\}^k, \vec b \in \F^{H_{r_0}}} \right) := 
\left|\prod_{{\bf \omega} \in \{0,1\}^k} {\mathcal C}^{\omega_1+\ldots+\omega_k}
F\left( (z({\bf \omega},\vec b))_{\vec b \in \F^{H_{r_0}}} \right) - 1\right|$$
where ${\mathcal C}$ is again the complex conjugation operator.
A routine computation gives the identities
$$ G_{f,r_0,r_1,\ldots,r_k}(x) = 
\E_{\vec a_1 \in \F^{H_{r_1}}} \ldots \E_{\vec a_k \in \F^{H_{r_k}}} |\mder_{\vec a_1 \cdot \vec v} \ldots \mder_{\vec a_k \cdot \vec v} g(x) - 1|$$
and
$$ G_{f,r_0}(x) = \E_{h_1,\ldots,h_k \in V} |\mder_{h_1} \ldots \mder_{h_k} g(x) - 1|$$
for any $r_0 < r_1 < \ldots < r_k$.  Also it is clear that $G$ is Lipschitz with norm $O_{F, r_0, k}(1)$.  The claim then follows from \eqref{intv} and the triangle inequality.
\end{proof}

Of course, in order to utilise the above lemmas we need to know that such accurate sampling sequences in fact exist.  This is the purpose of the following proposition.

\begin{proposition}[Existence of accurate sampling sequence]\label{hhh}  Let $d \geq 1$.  Then there exists a sequence
$$ 0=H_0 < H_1 < H_2 < H_3 < \ldots$$
of integers such that for every finite-dimensional vector space $V$ and any function $f: V \to \D$, there exists an accurate sampling sequence $v_1, v_2, v_3, \ldots \in V$ for $f$ of degree $d$ at scales $H_1,H_2,\ldots$.
\end{proposition}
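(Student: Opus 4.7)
The plan is to build the scales $H_j$ deterministically and then, for each given $V$ and $f$, choose the sampling vectors $(v_i)$ by the probabilistic method (i.i.d.\ uniform on $V$); I will show that with positive probability the bound \eqref{intv} holds for all tuples $(r_0, r_1, \ldots, r_d)$ and all Lipschitz $G$. The analytic input is a telescoped one-step estimate: fix a tuple and a Lipschitz $G$, and for $0 \le j \le d$ let $A_j(x)$ be the hybrid expression obtained from $G_{f, r_0, r_1, \ldots, r_d}(x)$ by replacing the outer averages $\E_{\vec a_i \in \F^{H_{r_i}}}$ for $i > j$ with $\E_{h_i \in V}$, so $A_d = G_{f, r_0, r_1, \ldots, r_d}$ and $A_0 = G_{f, r_0}$. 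The triangle inequality reduces matters to controlling each $\E \int_V |A_j - A_{j-1}|$: after the independent variables $h_{j+1}, \ldots, h_d$ have been integrated out, the $j$-th difference is precisely the error in replacing $\E_{\vec a_j \in \F^{H_{r_j}}}$ by $\E_{h_j \in V}$ inside a function of $v_1, \ldots, v_{H_{r_{j-1}}}$ of $L^\infty$-norm $O(\|G\|_{\Lip})$. Corollary~\ref{sampling-2} with $m_0 = H_{r_{j-1}}$ and $m = H_{r_j}$ then yields
$$\E \int_V |A_j - A_{j-1}| \;\le\; \|G\|_{\Lip}\,\epsilon(H_{r_{j-1}}, H_{r_j}),$$
uniformly in $V$ and $f$, with $\epsilon(m_0, m) \to 0$ as $m \to \infty$ for each fixed $m_0$.

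I would then choose the scales by diagonalization: set $H_0 := 0$ and inductively pick $H_j$ so large that $\epsilon(H_i, H_j) \le \delta_{i,j}$ for all $0 \le i < j$, where the weights $\delta_{i,j} > 0$ are fixed in advance and chosen small enough to make the eventual union bound summable. Markov's inequality then gives, for each fixed $G$ with $\|G\|_{\Lip} \le 1$ and each fixed tuple, the bound $\P[\int_V |G_{f, r_0, \ldots, r_d} - G_{f, r_0}| > 1/(2r_1)] \le 2 r_1 \sum_{j=1}^d \delta_{r_{j-1}, r_j}$. To handle \emph{all} Lipschitz $G$ simultaneously, I use that the map $G \mapsto \int_V |G_{f, r_0, \ldots, r_d} - G_{f, r_0}|$ is $2$-Lipschitz with respect to $\|\cdot\|_\infty$, together with Arzel\`a--Ascoli: for each $r_0$ the unit ball $\{G : \|G\|_{\Lip} \le 1\}$ on the compact polydisk $\D^{\{0,1\}^d \times \F^{H_{r_0}}}$ admits a countable $(1/4r_1)$-net $\mathcal{N}_{r_0, r_1}$ in $\|\cdot\|_\infty$. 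Absorbing the cardinalities $|\mathcal{N}_{r_0, r_1}|$ and the count of tuples with a given $r_1$ into the choice of the $\delta_{i,j}$'s, a union bound over the countable family of pairs (net element, tuple) yields a positive-probability event on which \eqref{intv} holds (with $\|G\|_{\Lip}/(2r_1)$ on the right) for every $G \in \mathcal{N}_{r_0, r_1}$ and every tuple; $\|\cdot\|_\infty$-continuity then extends this to all Lipschitz $G$ at the stated constant $\|G\|_{\Lip}/r_1$.

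The main technical obstacle is choosing the scales $H_j$ rapidly enough to simultaneously beat the decay $1/r_1$ required by \eqref{intv}, the infinitely many tuples with each fixed $r_1$ (because $r_2, \ldots, r_d$ are unbounded), and the (double-exponentially large) $(1/r_1)$-net cardinalities on $\D^{\{0,1\}^d \times \F^{H_{r_0}}}$. Everything else is routine probabilistic-method bookkeeping once the one-step Corollary~\ref{sampling-2} is in place.
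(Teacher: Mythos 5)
Your overall strategy matches the paper's: deterministic scales, i.i.d.\ random $v_i$, telescoping into one-step hybrids $A_j$, Corollary~\ref{sampling-2} for the one-step estimate, an Arzel\`a--Ascoli net to handle all Lipschitz $G$, then Markov and a union bound. The analytic core (the one-step bound $\E\int_V|A_j-A_{j-1}|\leq\|G\|_{\Lip}\,\epsilon(H_{r_{j-1}},H_{r_j})$) is correct. However, there is a genuine gap in the union bound, which you dismiss as ``routine bookkeeping.''

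You propose to union bound over pairs (net element, tuple $(r_0,\ldots,r_d)$), with per-tuple failure probability $\leq 2r_1\sum_{j=1}^d\delta_{r_{j-1},r_j}$, and to ``absorb\ldots the count of tuples with a given $r_1$ into the choice of the $\delta_{i,j}$'s.'' This cannot work as stated: fix $j<d$ and an initial segment $(r_0,\ldots,r_j)$; the summand $2r_1\delta_{r_{j-1},r_j}$ is unchanged by $r_{j+1},\ldots,r_d$, and there are infinitely many extensions of that initial segment to a full length-$(d+1)$ tuple. So the sum of your failure-probability bounds over all tuples is $+\infty$ for every choice of the $\delta_{i,j}$. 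The missing observation is that the event $\{\int_V|A_j-A_{j-1}|>\tfrac{1}{2dr_1}\}$ depends \emph{only} on the initial segment $(r_0,\ldots,r_j)$ (and on the net element, whose cardinality depends only on $r_0,r_1$), so the union bound must be reorganized over tuples of \emph{variable length} $1\leq j\leq d$ (the paper's parameter $d'$). Once you do this, for each fixed top index $r_j=N$ there are only $O(N^d)$ admissible initial segments and the net size is bounded in terms of $H_0,\ldots,H_{N-1}$; thus choosing $H_N$ large enough given $H_0,\ldots,H_{N-1}$ (i.e.\ by the inductive scale construction you already propose) makes the total failure probability summable and $<1$. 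That regrouping is precisely what the paper's proof does with its displayed condition \eqref{intv-d} for each $d'$, and it is the step your write-up skips.
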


\begin{remark} The key point here is that the scales $H_1, H_2, H_3, \ldots$ are \emph{universal}; they depend on $d$, but otherwise and work for all vector spaces $V$ and functions $f$.
\end{remark}

\begin{proof}  We select $H_j$ recursively by the formula $H_{j+1} := F(H_j)$, where $F = F_d: \N \to \N$ is a sufficiently rapidly growing function depending on $d$ that we will choose later.

We use the probabilistic method, choosing $v_1,v_2,\ldots \in V$ uniformly at random, and showing that (if $F$ was sufficiently rapid) the resulting sequence will be an accurate sampling sequence with positive probability.

We begin with observing that in order to verify the condition \eqref{intv}, it suffices by the triangle inequality to show that with positive probability, one has
\begin{equation}\label{intv-d}
 \int_V | G_{f,r_0,r_1,\ldots,r_{d'}} - G_{f,r_0,r_1,\ldots,r_{d'-1}} | \leq \frac{\|G\|_{\Lip}}{dr_1} 
\end{equation}
for all $1 \leq d' \leq d$, all $0 \leq r_0 < \ldots < r_{d'}$, and every Lipschitz function $G: \D^{\{0,1\}^d \times \F^{H_{r_0}}} \to \C$, where
\begin{align*}
 G_{f,r_0,r_1,\ldots,r_{d'}}(x) &:= \E_{\vec a_1 \in \F^{H_{r_1}}, \ldots, \vec a_{d'} \in \F^{H_{r_{d'}}}} \E_{h_{d'+1},\ldots,h_d \in V}\\
&\quad G\left( (f(x+\sum_{j=1}^{d'} \omega_j \vec a_j \cdot \vec v_j + \sum_{j=d'+1}^d \omega_j h_j + b \cdot \vec v_0))_{(\omega_1,\ldots,\omega_d) \in \{0,1\}^d, b \in \F^{H_{r_0}}} \right).
\end{align*}

By the union bound, it will suffice to show that for all $1 \leq d' \leq d$ and all $0 \leq r_0 < \ldots < r_{d'}$, with probability $1 - o_{H_{r_{d'}} \to \infty; d, H_{r_0},\ldots,H_{r_{d'-1}},r_1}(1)$, \eqref{intv-d} holds for all Lipschitz functions $G: \D^{\{0,1\}^d \times \F^{H_{r_0}}} \to \C$, since the total failure probability can be made to be less than $1$ by choosing $F$ to be sufficiently rapid.

We can normalise $G$ to have Lipschitz norm $1$.  By the Arzel\'a-Ascoli theorem, the space of such functions is compact in the uniform topology.  In particular, there exists a collection of functions $G: \D^{\{0,1\}^d \times \F^{H_{r_0}}} \to \C$ of Lipschitz norm $1$, $S$, of size $O_{d,H_{r_0},r_1}(1)$, such that any other such Lipschitz function lies within $\frac{1}{4dr_1}$ (say) of a function $G \in  S$ in the uniform metric.  Because of this, we see from the union bound again that it will suffice to show that for all $1 \leq d' \leq d$ and all $0 \leq r_0 < \ldots < r_{d'}$, and all functions $G: \D^{\{0,1\}^d \times \F^{H_{r_0}}} \to \C$ of Lipschitz norm $1$ in $S$,
\begin{equation}\label{intv-2}
 \int_V | G_{f,r_0,r_1,\ldots,r_{d'}} - G_{f,r_0,r_1,\ldots,r_{d'-1}} | \leq \frac{1}{2dr_1} 
\end{equation}
of \eqref{intv-d} holds with probability $1 - o_{H_{r_{d'}} \to \infty; d, H_{r_0},\ldots,H_{r_{d'-1}},r_1}(1)$.

Fix $d', r_0,\ldots,r_{d'},G$.  By Markov's inequality, it suffices to show that
$$
\E  \int_V | G_{f,r_0,r_1,\ldots,r_{d'}} - G_{f,r_0,r_1,\ldots,r_{d'-1}} | 
= o_{H_{r_{d'}} \to \infty; d, H_{r_0},\ldots,H_{r_{d'-1}}}(1);$$
by linearity of expectation it thus suffices to show that
$$
\E  | G_{f,r_0,r_1,\ldots,r_{d'}}(x) - G_{f,r_0,r_1,\ldots,r_{d'-1}}(x) | 
= o_{H_{r_{d'}} \to \infty; d, H_{r_0},\ldots,H_{r_{d'-1}}}(1)$$
uniformly in $x \in V$.

Fix $x$.  We observe that
$$ G_{f,r_0,r_1,\ldots,r_{d'}}(x) = \E_{\vec a \in \F^{H_{r_{d'}}}}  f_{v_1,\ldots,v_{H_{r_{d'-1}}}}( \vec a \cdot \vec v_{d'} )$$
and
$$ G_{f,r_0,r_1,\ldots,r_{d'-1}}(x) = \E_{h \in V}  f_{v_1,\ldots,v_{H_{r_{d'-1}}}}( h )$$
where $f_{v_1,\ldots,v_{H_{r_{d'-1}}}}: V \to \D$ is the function
\begin{align*}
&f_{v_1,\ldots,v_{H_{r_{d'-1}}}}( h ) :=
\E_{\vec a_1 \in \F^{H_{r_1}}, \ldots, \vec a_{d'-1} \in \F^{H_{r_{d'-1}}}} \E_{h_{d'+1},\ldots,h_d \in V}\\
&G\left( (f(x+\sum_{j=1}^{d'-1} \omega_j \vec a_j \cdot \vec v_j + \omega_{d'} h_{d'} + \sum_{j=d'+1}^d \omega_j h_j + b \cdot \vec v_0))_{(\omega_1,\ldots,\omega_d) \in \{0,1\}^d, b \in \F^{H_{r_0}}} \right).
\end{align*}
As the notation suggests, the function $f_{v_1,\ldots,v_{H_{r_{d'-1}}}}$ depends on the values of $v_1,\ldots,v_{H_{r_{d'-1}}}$ but not on higher elements of the sequence.  Also, as $G$ has Lipschitz norm $1$, $f$ takes values in $\D$.  The claim now follows from Corollary \ref{sampling-2}.
\end{proof}

\section{Proof of main theorems}

We are now ready to prove the main theorems.  We shall just prove Theorem \ref{main2} using Theorem \ref{ergmain-thm2}; the deduction of Theorem \ref{main} using Theorem \ref{ergmain-thm} is exactly analogous (see the brief remarks at the end of this section).

Fix $\F$ and $d$, and let $k = C(d)$ be the quantity in Theorem \ref{ergmain-thm2}.  By increasing $k$ if necessary we may assume $k \geq d$.  Assume for sake of contradiction that Theorem \ref{main2} failed for this choice of $\F, d, k$.  Then we can find $\delta > 0$ and a sequence $f^{(n)}: V^{(n)} \to \D$ of functions on finite-dimensional vector spaces $V^{(n)}$ such that
\begin{equation}\label{faun}
\|f^{(n)}\|_{U^d(V^{(n)})} \geq \delta
\end{equation}
for all $n$, but
\begin{equation}\label{fund}
\|f^{(n)}\|_{u^k(V^{(n)})} = o_{n \to \infty}(1).
\end{equation}
We now let $F(x) := x$, and let 
$$ 1 < H_1 < H_2 < \ldots$$
be the sequence in Proposition \ref{hhh}; it is important to note that this sequence does not depend on $n$.  From that proposition, we can find an accurate sampling sequence
$$ v^{(n)}_1, v^{(n)}_2, \ldots \in V^{(n)}$$
for $f^{(n)}$ of degree $k$ at these scales.  We fix such a sequence for each $n$.

We will use these sampling sequences to lift the functions $f^{(n)}$ on $V^{(n)}$ to a universal dynamical system for $\F^\omega$ by the usual Furstenberg correspondence principle method.  We begin by constructing this universal space.

\begin{definition}[Furstenberg universal space]
Let $X := \D^{\F^\omega}$ be the space of functions $\zeta: \F^\omega \to \D$.  With the product topology, this is a compact metrisable space, with Borel $\sigma$-algebra $\B$.  It has a continuous action $h \mapsto T_h$ of the additive group $\F^\omega$, defined by the formula
$$ T_h \zeta(x) := \zeta(x+h).$$
We let $\Pr(X)^T$ be the space of all Borel probability measures $\mu$ on $X$ which are invariant with respect to this action; note that $\X = (X,\B,\mu,(T_h)_{h \in \F^\omega})$ is a $\F^\omega$-system for any $\mu \in \Pr(X)^T$.  If $\mu^{(n)} \in \Pr(X)^T$ is a sequence of such measures, and $\mu \in \Pr(X)^T$ is another measure, we say that $\mu^{(n)}$ \emph{converges vaguely} to $\mu$ if we have
$$ \lim_{n \to \infty} \int_X \phi(\zeta)\ d\mu^{(n)}(\zeta) \to \int_X \phi(\zeta)\ d\mu(\zeta)$$
for all continuous functions $\phi: X \to \C$.
\end{definition}

Because $X$ is compact metrisable, and the action of $T$ is continuous it is a well-known fact that $\Pr(X)^T$ is sequentially compact; thus every sequence of measures in $\Pr(X)^T$ has a vaguely convergent subsequence whose limit is also in $\Pr(X)^T$.

For each $n$, we define a measure $\mu^{(n)} \in \Pr(X)^T$ on $X$ by the formula
$$ \mu^{(n)} = \E_{x \in V^{(n)}} \delta_{\zeta_{n,x}}$$
where $\delta$ denotes the Dirac mass and for each $x \in V^{(n)}$, $\zeta_{n,x} \in X$ is the function
$$ \zeta_{n,x}(\vec a) := T_{\vec a \cdot \vec v^{(n)}}f^{(n)}(x)=T_{\sum_{m=1}^\infty a_m v^{(n)}_m} f^{(n)}(x)$$
for all $\vec a  \in \F^\omega$ (note the sum on the right-hand side has only finitely many non-zero terms).
Observe that $\mu^{(n)}$ is indeed $T$-invariant.
By passing to a subsequence if necessary, we may assume that $\mu^{(n)}$ converges vaguely to a limit $\mu \in \Pr(X)^T$.  We write $\X := (X, \B, \mu, (T_h)_{h \in \F^\omega})$.

Let $f: X \to \D$ be the indicator function $f(\zeta) := \zeta(0)$.  We observe the key correspondence
\begin{equation}\label{ident-eq}
\int_X G(T_{\vec a_1} f, \ldots, T_{\vec a_k} f)\ d\mu^{(n)}(\zeta) = \int_{V^{(n)}} G( T_{\vec a_1 \cdot \vec v^{(n)}} f^{(n)}, \ldots, T_{\vec a_k \cdot \vec v^{(n)}} f^{(n)} )
\end{equation}
for all $\vec a_1,\ldots,\vec a_k \in \F^\omega$, all $n$, and all continuous $G: \D^k \to \C$.

We now record the (standard) fact that the countable collection of shifts $T_h f$ for $h \in \F^\omega$ generate $L^\infty(\X)$:

\begin{lemma}[$T_h f$ generate $L^\infty(\X)$]\label{Gen}  Given any $\phi \in L^\infty(\X)$ and $\eps > 0$, there exists a finite number of shifts $\vec h_1,\ldots,\vec h_k \in \F^\omega$ and a continuous function $G: \D^k \to \C$ such that $$\int_X |\phi - G( T_{\vec h_1} f, \ldots, T_{\vec h_k} f )|\ d\mu \leq \eps.$$
\end{lemma}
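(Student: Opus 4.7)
The plan is a standard Stone--Weierstrass plus continuous-function density argument. The key observation is that since $X = \D^{\F^\omega}$ and $f(\zeta) := \zeta(0)$, the identity $T_{\vec h} f(\zeta) = f(T_{\vec h}\zeta) = \zeta(\vec h)$ shows that the family $\{T_{\vec h} f : \vec h \in \F^\omega\}$ is exactly the family of coordinate projections $\pi_{\vec h}: X \to \D$ of the product space $X$.

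First I would reduce to approximating a continuous function. Since $X$ is compact metrizable and $\mu$ is a Borel probability measure on it, $C(X)$ is dense in $L^1(X,\mu)$; hence there exists a continuous $\tilde\phi: X \to \C$ with $\int_X |\phi - \tilde\phi|\, d\mu \le \eps/2$ (and after truncation onto the disk of radius $\|\phi\|_{L^\infty}$ one may assume $\tilde\phi$ is bounded).

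Next I would apply the complex Stone--Weierstrass theorem to the unital $*$-subalgebra $\mathcal A \subseteq C(X)$ generated by $\{\pi_{\vec h}, \overline{\pi_{\vec h}} : \vec h \in \F^\omega\}$. Because the coordinate projections separate points of the product $X$, and $\mathcal A$ is self-adjoint and contains the constants, Stone--Weierstrass gives that $\mathcal A$ is uniformly dense in $C(X)$. So there exist shifts $\vec h_1,\ldots,\vec h_k \in \F^\omega$ and a polynomial $G: \D^k \to \C$ in the variables $z_j, \overline{z_j}$ with
\[
\sup_{\zeta \in X} \bigl|\tilde\phi(\zeta) - G(\pi_{\vec h_1}(\zeta), \ldots, \pi_{\vec h_k}(\zeta))\bigr| \le \eps/2.
\]
Since $G$ is continuous on $\D^k$ and $\pi_{\vec h_j} = T_{\vec h_j} f$, combining the uniform bound with the $L^1$ bound from the first step via the triangle inequality yields $\int_X |\phi - G(T_{\vec h_1}f,\ldots,T_{\vec h_k}f)|\,d\mu \le \eps$, as required.

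There is no real obstacle; this is a preparatory measure-theoretic reformulation rather than a substantive step. Its purpose is to let the subsequent correspondence-principle arguments access an arbitrary $L^\infty(\X)$ function through the finite collection of shifts of the generator $f$, to which the statistical sampling lemmas (Lemmas \ref{gow-lem}, \ref{avg-lem}, \ref{test-lem}) can then be applied.
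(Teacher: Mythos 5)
Your proof is correct and takes essentially the same route as the paper's: approximate $\phi$ in $L^1(\mu)$ by a continuous function (using that $\mu$ is a Radon measure on the compact metrizable space $X$), then apply Stone--Weierstrass to the $*$-algebra generated by the coordinate projections $\zeta \mapsto \zeta(\vec h) = T_{\vec h} f(\zeta)$. The only difference is presentational—you spell out the verification of the Stone--Weierstrass hypotheses, while the paper states it tersely.
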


\begin{proof} For continuous $\phi$, the claim follows easily from the Stone-Weierstrass theorem (and in this case we can upgrade the $L^1$ approximation to $L^\infty$ approximation).  As $X$ is compact metrisable, the Borel measure $\mu$ is in fact a Radon measure, and so (by Urysohn's lemma) the continuous functions are dense in $L^\infty(\X)$ in the $L^1(\X)$ topology, and the claim follows.
\end{proof}

We can now use the machinery of the previous section to deduce various important facts about $\X$ and $f$.  For instance, Lemma \ref{avg-lem} now implies

\begin{lemma}[Ergodicity] $\X$ is ergodic.
\end{lemma}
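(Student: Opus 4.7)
The plan is to combine Lemma \ref{Gen}, Lemma \ref{avg-lem}, the correspondence \eqref{ident-eq}, and vague convergence $\mu^{(n)} \to \mu$. Let $\phi \in L^\infty(\X)$ satisfy $T_h \phi = \phi$ $\mu$-a.e.\ for every $h \in \F^\omega$; we must show $\phi = \int_X \phi\, d\mu$ $\mu$-a.e. Fix $\eps > 0$. By Lemma \ref{Gen}, there are shifts $\vec h_1,\ldots,\vec h_m \in \F^\omega$ and a continuous $G: \D^m \to \C$ such that $g := G(T_{\vec h_1} f, \ldots, T_{\vec h_m} f)$ satisfies $\|\phi - g\|_{L^1(\mu)} \leq \eps$.

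Next, define the corresponding functions $g^{(n)}: V^{(n)} \to \C$ by
\[
g^{(n)}(x) := G(T_{\vec h_1 \cdot \vec v^{(n)}} f^{(n)}(x), \ldots, T_{\vec h_m \cdot \vec v^{(n)}} f^{(n)}(x)).
\]
Since $v^{(n)}_1, v^{(n)}_2, \ldots$ is an accurate sampling sequence of degree $k \geq 1$ for $f^{(n)}$, Lemma \ref{avg-lem} gives
\[
\int_{V^{(n)}} \left| \E_{\vec a \in \F^{H_r}} T_{\vec a \cdot \vec v^{(n)}} g^{(n)} - \int_{V^{(n)}} g^{(n)} \right| = o_{r \to \infty;\, G,m,\vec h_1,\ldots,\vec h_m}(1),
\]
with an error that is uniform in $n$. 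Observe that the map $\zeta \mapsto |\E_{\vec a \in \F^{H_r}} G(\zeta(\vec a + \vec h_1),\ldots,\zeta(\vec a + \vec h_m)) - c|$ is a continuous function of $\zeta \in X$ depending on only finitely many coordinates, so \eqref{ident-eq} applies to transfer the above estimate to the measure $\mu^{(n)}$ on $X$; specifically, taking $c = \int_X g\, d\mu$ and absorbing the $O(1)$ discrepancy $|\int_X g\, d\mu^{(n)} - \int_X g\, d\mu|$ (which is $o_{n\to\infty}(1)$ since $g$ is continuous and $\mu^{(n)} \to \mu$ vaguely), we obtain
\[
\int_X \left| \E_{\vec a \in \F^{H_r}} T_{\vec a} g - \int_X g\, d\mu \right| d\mu^{(n)} \leq o_{r \to \infty}(1) + o_{n \to \infty}(1).
\]
The integrand is now a fixed continuous function on $X$ (independent of $n$), so passing to the vague limit $n \to \infty$ gives
\[
\int_X \left| \E_{\vec a \in \F^{H_r}} T_{\vec a} g - \int_X g\, d\mu \right| d\mu \leq o_{r \to \infty}(1).
\]

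Finally, invoke the invariance of $\phi$: since $T_{\vec a}\phi = \phi$ $\mu$-a.e., we have $\E_{\vec a \in \F^{H_r}} T_{\vec a} \phi = \phi$ $\mu$-a.e., and by the $T$-invariance of $\mu$,
\[
\left\| \phi - \E_{\vec a \in \F^{H_r}} T_{\vec a} g \right\|_{L^1(\mu)} \leq \E_{\vec a} \|T_{\vec a}(\phi - g)\|_{L^1(\mu)} = \|\phi - g\|_{L^1(\mu)} \leq \eps,
\]
and likewise $|\int \phi\, d\mu - \int g\, d\mu| \leq \eps$. A triangle inequality then yields $\|\phi - \int \phi\, d\mu\|_{L^1(\mu)} \leq 2\eps + o_{r \to \infty}(1)$; sending $r \to \infty$ and then $\eps \to 0$ concludes that $\phi$ is $\mu$-a.e.\ constant.

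The main subtlety, and the step that takes the most care to set up, is the transfer from $V^{(n)}$ to $\mu^{(n)}$ via \eqref{ident-eq} and then to $\mu$ via vague convergence: one must ensure that the functional whose integral is being estimated is genuinely a \emph{fixed} continuous function of finitely many coordinates of $\zeta$ (so that the vague limit commutes with the integration), which is why we center the local average at the $\mu$-integral of $g$ rather than the $\mu^{(n)}$- or $V^{(n)}$-integral and then absorb the discrepancy separately.
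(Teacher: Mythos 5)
Your proof is correct and follows essentially the same route as the paper's: Lemma \ref{Gen} to reduce to continuous functions of finitely many shifts of $f$, then the correspondence \eqref{ident-eq} together with Lemma \ref{avg-lem} on $V^{(n)}$, then vague convergence. The only cosmetic difference is that the paper invokes the mean ergodic theorem to reduce ergodicity to $\E_{\vec h\in\F^{H_r}}T_{\vec h}g \to \int_X g\,d\mu$ in $L^1(\mu)$, whereas you argue directly that an invariant $\phi$ is constant by comparing it to its approximant $g$ via the $T$-invariance of $\mu$ and $\phi$; your explicit absorption of the $n$-dependent centering $\int_X g\,d\mu^{(n)}$ before passing to the vague limit is in fact a touch more careful than the paper's terse ``by vague convergence it suffices''.
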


\begin{proof}
By the mean ergodic theorem, it suffices to show that
$$ \lim_{r \to \infty} \int_X \left| \E_{\vec h \in \F^{H_r}} T_h g - \int_X g\ d\mu\right|\ d\mu = 0$$
for all $g \in L^\infty(X)$.  By Lemma \ref{Gen} and a standard limiting argument it suffices to show this for $g$ which are functions of finitely many shifts of $f$, say $g = G( T_{\vec b_1} f, \ldots, T_{\vec b_k} f )$.  We will then show that
$$ \int_X \left| \E_{\vec h \in \F^{H_r}} T_{\vec h} g - \int_X g\ d\mu \right|\ d\mu = o_{r \to \infty; G, k, \vec b_1,\ldots,\vec b_k}(1).$$
By vague convergence it suffices to show that
$$ \int_X \left| \E_{h \in \F^{H_r}} T_{\vec h} g - \int_X g\ d\mu^{(n)}\right|\ d\mu^{(n)} = o_{r \to \infty; G, k, \vec b_1,\ldots,\vec b_k}(1)$$
for all $n$.  By \eqref{ident-eq}, we can rewrite the left-hand side as
$$ \int_V \left| \E_{\vec h \in \F^{H_r}} T_{\vec h \cdot \vec v^{(n)}_r} g^{(n)} - \int_V g^{(n)}\right|$$
where 
$$g^{(n)} := G( T_{\vec b_1 \cdot \vec v^{(n)}} f^{(n)}, \ldots, T_{\vec b_k \cdot \vec v^{(n)}} f^{(n)}).$$
But the claim now follows from Lemma \ref{avg-lem} (and Remark \ref{descent}).
\end{proof}

In a similar spirit, Lemma \ref{gow-lem} implies

\begin{lemma}[$f$ has large Gowers-Host-Kra norm]  We have $\|f\|_{U^d(\X)} \geq \delta$.
\end{lemma}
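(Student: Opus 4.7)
The plan is to transfer the lower bound \eqref{faun} on $\|f^{(n)}\|_{U^d(V^{(n)})}$ into the ergodic world by combining three tools already in hand: the sampling approximation of Lemma \ref{gow-lem}, the correspondence identity \eqref{ident-eq}, and vague convergence $\mu^{(n)} \to \mu$.

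First I would fix any increasing sequence of scales $0 < r_1 < r_2 < \ldots < r_d$ and apply Lemma \ref{gow-lem} to each $f^{(n)}$ with its accurate sampling sequence $v^{(n)}_1, v^{(n)}_2, \ldots$. Combined with \eqref{faun}, this gives
\[
\E_{\vec a_1 \in \F^{H_{r_1}}, \ldots, \vec a_d \in \F^{H_{r_d}}} \int_{V^{(n)}} \mder_{\vec a_1 \cdot \vec v^{(n)}_{r_1}} \cdots \mder_{\vec a_d \cdot \vec v^{(n)}_{r_d}} f^{(n)} \geq \delta^{2^d} + o_{r_1 \to \infty; d}(1),
\]
with an error term uniform in $n$.

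Next I would expand the product of multiplicative derivatives as $G((T_{\omega \cdot \vec a \cdot \vec v^{(n)}} f^{(n)})_{\omega \in \{0,1\}^d})$ for the continuous function $G((z_\omega)_\omega) := \prod_\omega \mathcal{C}^{|\omega|} z_\omega$, and apply \eqref{ident-eq} to rewrite the left-hand side as $\E_{\vec a_1, \ldots, \vec a_d} \int_X \mder_{\vec a_1} \cdots \mder_{\vec a_d} f \, d\mu^{(n)}$. Since the integrand is continuous on $X$ for each fixed tuple of shifts, vague convergence of $\mu^{(n)}$ to $\mu$ together with bounded convergence lets me take $n \to \infty$ inside the outer expectation to obtain
\[
\E_{\vec a_1 \in \F^{H_{r_1}}, \ldots, \vec a_d \in \F^{H_{r_d}}} \int_X \mder_{\vec a_1} \cdots \mder_{\vec a_d} f \, d\mu \geq \delta^{2^d} + o_{r_1 \to \infty; d}(1).
\]

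The final step is to send $r_1 \to \infty$ (followed by $r_2, \ldots, r_d$) and identify the resulting nested expectations with a lower bound for $\|f\|_{U^d(\X)}^{2^d}$ coming from the recursive Gowers--Host--Kra definition. Each inner expectation $\E_{\vec a_j \in \F^{H_{r_j}}}$ with $r_j \to \infty$ realises a particular sequence entering the $\limsup_{n_j \to \infty}$ in the $j$-th step of the recursion, and since limsup dominates any such sequence, we iteratively conclude $\|f\|_{U^d(\X)}^{2^d} \geq \delta^{2^d}$. The hard part is precisely this last matching step: a single multi-scale simultaneous average is a priori weaker than an iterated limsup, and care is needed in selecting the order and relative rates at which $r_1 < r_2 < \ldots < r_d$ tend to infinity (innermost $r_1$ first) so that each $\F^{H_{r_j}}$-average feeds into the right slot of the nested limsup.
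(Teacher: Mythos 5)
Your proposal traverses exactly the same chain of implications as the paper's proof, just in the opposite logical direction: you push forward from \eqref{faun} via Lemma \ref{gow-lem}, \eqref{ident-eq}, and vague convergence, whereas the paper unwinds the Gowers--Host--Kra definition and reduces backwards until \eqref{faun} and Lemma \ref{gow-lem} can be invoked. The ingredients and the mechanics are correct through the transfer to $\int_X \mder_{\vec a_1}\cdots\mder_{\vec a_d} f\,d\mu$. The gap is in the final matching step, which you correctly flag as ``the hard part'' but then resolve in the wrong direction. In the nested Gowers--Host--Kra recursion, the innermost limsup is evaluated for each \emph{fixed} value of the outer variables, so the innermost scale can (indeed must) be chosen much larger than the outer scales. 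Meanwhile Lemma \ref{gow-lem} requires $r_1 < r_2 < \cdots < r_d$ with an error $o_{r_1 \to \infty; d}(1)$ controlled by the \emph{smallest} scale $r_1$. If you identify $r_1$ with the innermost limsup slot, as your parenthetical ``(innermost $r_1$ first)'' suggests, then for a fixed outer scale $r_d$ the inner variable $r_1$ is confined below $r_d$, and you cannot send $r_1 \to \infty$ at all; the error never vanishes. The correct matching sends $r_1$ to the \emph{outermost} slot (the last variable to tend to infinity), which makes the $o_{r_1\to\infty}(1)$ rate exactly the one that must vanish in the outer limit, and leaves $r_d$ free to go to infinity first in the innermost slot. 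This is precisely the step the paper calls ``reversing the order of averages'': since $\int_X \mder_{h_1}\cdots\mder_{h_d} f\,d\mu$ is symmetric under permuting the $h_j$, one relabels $h_j \leftrightarrow h_{d+1-j}$, converting the constraint $r_d < \cdots < r_1$ (natural for the GHK nesting) into $r_1 < \cdots < r_d$ (required by Lemma \ref{gow-lem}). Without that relabeling your argument, as written, does not close.
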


\begin{proof}  From the mean ergodic theorem we have
$$ \| f\|_{U^1(\X)}^2 = \limsup_{K_1 \to \infty} \E_{\vec h_1 \in \F^{K_1}} \int_X \mder_{\vec h_1} f \ d\mu$$
and by induction we have
$$ \| f\|_{U^d(\X)}^{2^d} = \limsup_{K_d \to \infty} \ldots \limsup_{K_1 \to \infty} 
\E_{\vec h_d \in \F^{K_d}} \ldots \E_{\vec h_1 \in \F^{K_1}} \int_X \mder_{\vec h_1} \ldots \mder_{\vec h_d} f \ d\mu.$$
It thus suffices to show that 
$$ \E_{\vec h_d \in \F^{H_{r_d}}} \ldots \E_{\vec h_1 \in \F^{H_{r_1}}} \int_X \mder_{\vec h_1} \ldots \mder_{\vec h_d} f \ d\mu > \delta^{2^d} - o_{r_d \to \infty}(1)$$
whenever
$$ 1 \leq r_d < \ldots < r_1.$$
By reversing the order of averages, it suffices to show that
$$ \E_{\vec h_d \in \F^{H_{r_d}}} \ldots \E_{\vec h_1 \in \F^{H_{r_1}}} \int_X \mder_{\vec h_1} \ldots \mder_{\vec  h_d} f \ d\mu > \delta^{2^d} - o_{r_1 \to \infty}(1)$$
whenever
$$ 1 \leq r_1 < \ldots < r_d.$$
Fix $r_1,\ldots,r_d$.  By weak convergence, it suffices to show that
$$ \E_{\vec h_d \in \F^{H_{r_d}}} \ldots \E_{\vec  h_1 \in \F^{H_{r_1}}} \int_X \mder_{\vec h_1} \ldots \mder_{\vec h_d} f \ d\mu^{(n)} > \delta^{2^d} - o_{r_1 \to \infty}(1)$$
for all $n$.  By \eqref{faun}, it suffices to show that
$$ \E_{\vec h_d \in \F^{H_{r_d}}} \ldots \E_{\vec h_1 \in \F^{H_{r_1}}} \int_X \mder_{\vec h_1} \ldots \mder_{\vec h_d} f \ d\mu^{(n)} > \| f^{(n)} \|_{U^d(V^{(n)})}^{2^d} - o_{r_1 \to \infty}(1).$$
By \eqref{ident-eq}, left-hand side can be rephrased as
$$ \int_V \E_{\vec a_1 \in \F^{H_{r_1}}, \ldots, \vec a_d \in \F^{H_{r_d}}} \mder_{\vec a_1 \cdot \vec v_{r_1}^{(n)}} \ldots \mder_{\vec a_d \cdot \vec v_{r_d}^{(n)}} f^{(n)}$$
and the claim now follows from Lemma \ref{gow-lem} (and Remark \ref{descent}).
\end{proof}

We have now verified all the hypotheses of Theorem \ref{ergmain-thm}.  Applying that theorem, we conclude that $\|f\|_{u^k(\X)} > c$ for some $c > 0$ (which could be very small, but positive).  Thus we can find a phase polynomial $\phi \in \Phase_{k-1}(\X)$ of degree $k-1$ such that
$$ |\int_X f \overline{\phi}\ d\mu| > c.$$
Let $\eps > 0$ be a small number (depending on $d, k, c$) to be chosen later.  By Lemma \ref{Gen}, we can find $\vec b_1,\ldots,\vec b_m \in \F^\omega$ (with $m$ potentially quite large, but finite) and a continuous $G: \D^m \to \C$ such that
\begin{equation}\label{slither-eq} \int_X |\phi - G( T_{\vec b_1} f, \ldots, T_{\vec b_m} f )| \leq \eps.
\end{equation}
Since $\phi$ takes values in $\D$, we may assume without loss of generality that $G$ does also.
If $\eps$ is small enough depending on $c$, we thus have
$$ |\int_X f \overline{G(T_{\vec b_1} f, \ldots, T_{\vec b_m} f)}\ d\mu| > c/2.$$
By vague convergence, we thus have
$$ |\int_X f \overline{G(T_{\vec b_1} f, \ldots, T_{\vec b_m} f)}\ d\mu^{(n)}| > c/4$$
for all sufficiently large $n$ (depending on $G, m, c$).  Using \eqref{ident-eq}, we rearrange this as
\begin{equation}\label{c4}
|\int_V f^{(n)} \overline{G(T_{\vec b_1 \cdot \vec v^{(n)}} f^{(n)}, \ldots, T_{\vec b_m \cdot \vec v^{(n)}} f^{(n)})}| > c/4.
\end{equation}
Now let $r_1$ be a large integer depending on the $\vec b_1,\ldots,\vec b_m,\eps$, and let $r_j := r_1 + (j-1)$ for $j=2,\ldots,d$.  Since $\phi$ is a phase polynomial of degree $k-1$, we have
$$ \int_X |\mder_{\vec a_1} \ldots \mder_{\vec a_k} \phi - 1|\ d\mu = 0$$
for all $\vec a_1 \in \F^{H_{r_1}}, \ldots,\vec a_k \in \F^{H_{r_k}}$.  From many applications of \eqref{slither-eq}, the triangle inequality, and the boundedness of $\phi, G$, we conclude that
$$ \int_X |\mder_{\vec a_1} \ldots \mder_{\vec a_k} G(T_{\vec b_1} f, \ldots, T_{\vec b_m} f) - 1|\ d\mu \ll_k \eps$$
for all $\vec a_1 \in \F^{H_{r_1}}, \ldots,\vec a_k \in \F^{H_{r_k}}$.  By vague convergence, this implies that
$$ \int_X |\mder_{\vec a_1} \ldots \mder_{\vec a_k} G(T_{\vec b_1} f, \ldots, T_{\vec b_m} f) - 1|\ d\mu^{(n)} \ll_k \eps$$
for all sufficiently large $n$ (depending on $\eps, H_{r_1},\ldots,H_{r_k}$).  Using \eqref{ident-eq}, we can rearrange the left-hand side as
$$ \int_{V^{(n)}} |\mder_{\vec a_1 \cdot \vec v^{(n)}} \ldots \mder_{\vec a_k \cdot \vec v^{(n)}} G(T_{\vec b_1\cdot \vec v^{(n)}} f^{(n)}, \ldots, T_{\vec b_m\cdot \vec v^{(n)}} f^{(n)}) - 1|$$
and so on averaging we obtain
$$ \E_{\vec a_1 \in \F^{H_{r_1}},\ldots,\vec a_k \in \F^{H_{r_k}}} \int_{V^{(n)}} |\mder_{\vec a_1 \cdot \vec v^{(n)}} \ldots \mder_{\vec a_k \cdot \vec v^{(n)}} G(T_{\vec b_1\cdot \vec v^{(n)}} f^{(n)}, \ldots, T_{\vec b_m\cdot \vec v^{(n)}} f^{(n)}) - 1| \ll_k \eps.$$
Applying Lemma \ref{test-lem} we conclude (if $r_1$ is sufficiently large depending on $\vec b_1,\ldots,\vec b_m,\eps$) that
$$ \E_{h_1,\ldots,h_k \in V^{(n)}} \int_{V^{(n)}} |\mder_{ h_1} \ldots \mder_{h_k} G(T_{\vec b_1\cdot \vec v^{(n)}} f^{(n)}, \ldots, T_{\vec b_m\cdot \vec v^{(n)}} f^{(n)}) - 1| \ll_k \eps.$$

Now we invoke a local testability lemma:

\begin{lemma}[Polynomiality is locally testable]\label{polytest-lem} Let $V$ be a finite-dimensional vector space, let $k \geq 1$, let $g: V \to \D$ be a bounded function, and suppose that
\begin{equation}\label{eek-eq} \E_{h_1,\ldots,h_k \in V} \int_V |\mder_{h_1} \ldots \mder_{h_k} g - 1| \leq \eps
\end{equation}
for some $\eps>0$.  Then there exists a phase polynomial $\phi \in \Phase_{k-1}(V)$ such that
$$ \int_V |g- \phi| \leq o_{\eps \to 0;d}(1).$$
\end{lemma}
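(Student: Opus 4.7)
The plan is to adapt the local testing and self-correction argument of \cite{akklr} from the setting of $\F$-valued polynomials to the setting of $\D$-valued phase polynomials. I would break the proof into three stages.

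\textbf{Stage 1 (Normalization to the unit circle).} By Markov's inequality, the hypothesis \eqref{eek-eq} implies that $|\mder_{h_1}\cdots\mder_{h_k} g(x) - 1| \leq \sqrt{\eps}$ for a proportion $1 - \sqrt{\eps}$ of tuples $(x,h_1,\ldots,h_k) \in V^{k+1}$. Since $|\mder_{h_1}\cdots\mder_{h_k} g(x)| = \prod_{\omega \in \{0,1\}^k} |g(x+\omega\cdot h)|$ is a product of $2^k$ numbers in $[0,1]$, the product being close to $1$ forces each factor to be close to $1$; specializing to $\omega = 0$ gives $|g(x)| \geq 1 - O_k(\sqrt{\eps})$ for most $x$. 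Replacing $g$ by $\tilde g := g/|g|$ on $\{|g| > 1/2\}$ and by $1$ elsewhere yields a function $\tilde g : V \to S^1$ with $\|g - \tilde g\|_{L^1(V)} = o_{\eps \to 0; k}(1)$ and a comparable testing bound. This reduces matters to the case $g : V \to S^1$.

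\textbf{Stage 2 (Plurality decoding).} Since $g$ is now unitary, the identity $\mder_{h_1}\cdots\mder_{h_k} g(x) = 1$ is equivalent to $g(x) = P_h(x)$, where
$$ P_h(x) := \prod_{\omega \in \{0,1\}^k \setminus \{0\}} \mathcal{C}^{|\omega|+1} g(x + \omega \cdot h). $$
For each $x \in V$, I would define $\phi(x) \in S^1$ to be a minimizer of $\E_{h \in V^k} |P_h(x) - \phi(x)|$. The key lemma is a BLR-style ``two-at-a-time'' estimate: comparing $P_h(x)$ and $P_{h'}(x)$ for independent $h, h' \in V^k$, and expanding the resulting $2k$-fold product of values of $g$, one produces a $2k$-dimensional derivative identity whose expected deviation from $1$ is $O_k(\eps)$ by the hypothesis. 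This shows that for $x$ outside an exceptional set of measure $o_{\eps \to 0; k}(1)$, the random value $P_h(x)$ is concentrated at $\phi(x)$ to within $o_{\eps \to 0; k}(1)$. In particular $\|g - \phi\|_{L^1(V)} = o_{\eps \to 0; k}(1)$, which is the desired approximation.

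\textbf{Stage 3 (Self-correction).} It remains to show $\phi \in \Phase_{k-1}(V)$, i.e.\ $\mder_{h_1} \cdots \mder_{h_k} \phi \equiv 1$ \emph{identically} (not just approximately). Standard self-correction shows that $\mder_{h_1} \cdots \mder_{h_k} \phi(x) = 1$ for some proportion strictly greater than $1 - 2^{-k-1}$ of tuples, since $\phi$ agrees with $g$ on most of $V$ and $g$ approximately satisfies the identity. For any fixed $(x,h_1,\ldots,h_k)$, one can then use the plurality definition of each of the $2^k$ values $\phi(x+\omega\cdot h)$ and a union bound to select an auxiliary shift $h' \in V^k$ for which the identity holds simultaneously at all the relevant points; rewriting via $g$ and using that $g$ approximately satisfies the identity at all these shifts yields the exact equality $\mder_{h_1} \cdots \mder_{h_k} \phi(x) = 1$. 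The main obstacle is this rigidity step: going from ``approximately a phase polynomial'' to ``exactly a phase polynomial,'' which is the genuine content of the AKKLR testing theorem; the $\D$-valued setting is handled by Stage 1, and the passage from $\F$-valued polynomials to phase polynomials is combinatorially inert since the self-correction only uses the defining identity of $\Phase_{k-1}(V)$.
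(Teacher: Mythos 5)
Your proposal takes a genuinely different route from the paper: you aim for a direct BLR/AKKLR-style plurality decoding plus self-correction, whereas the paper inducts on $k$, obtaining for each good $h$ a derivative phase polynomial $\phi_h \in \Phase_{k-2}(V)$ with $\int_V |\mder_h f - \phi_h| = o(1)$, and then ``integrates'' the approximate cocycle $(\phi_h)_h$ into a single $\psi \in \Phase_{k-1}(V)$ using two structural lemmas (discreteness of phase-polynomial values, Lemma \ref{disc-lem}, and rigidity, Lemma \ref{rigid}).

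The problem is in Stage 3, and it is exactly the issue the paper flags in the sentence ``in view of the subtle difference between phase polynomials and polynomials\ldots we give a full proof.'' In the $\F$-valued AKKLR setting, the plurality-decoded value $\phi(x)$ agrees \emph{exactly} with $P_{h'}(x)$ for a majority of $h'$, because the range is discrete; so when you pick an auxiliary $h'$ simultaneously good for all $2^k$ points $x + \omega\cdot h$, the substitution $\phi(x+\omega\cdot h) = P_{h'}(x+\omega\cdot h)$ is an identity, not an approximation, and the derivative collapses exactly. In your $S^1$-valued setting, $\phi(x)$ is a minimizer of a continuous functional and is only within $o_{\eps\to 0;k}(1)$ of the concentrated value $P_{h'}(x)$. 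Running your union-bound argument then gives $\mder_{h_1}\cdots\mder_{h_k}\phi(x) = 1 + O_k(o_{\eps\to 0;k}(1))$ with the error accumulated over $2^k$ substitutions and the $o(1)$-error in the $g$-identity --- this is an approximate identity, not the exact identity $\mder_{h_1}\cdots\mder_{h_k}\phi \equiv 1$ that membership in $\Phase_{k-1}(V)$ requires. Your concluding claim that ``the passage from $\F$-valued polynomials to phase polynomials is combinatorially inert'' is therefore not correct: the discreteness of the range is doing real work in the self-correction step, not just in the base case.

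To close this gap you need precisely what the paper supplies and you omit: a discretisation step forcing the candidate $\phi$ (or, in the paper's version, the derivative cocycles $\phi_h$) to take values in a fixed finite set of roots of unity (up to a global phase), so that ``$=1+o(1)$'' can be promoted to ``$=1$'' once $\eps$ is small, together with a rigidity statement (Lemma \ref{rigid}) converting near-constancy of a phase polynomial into exact constancy. Without some such mechanism, the self-correction in your Stage 3 cannot terminate in an exact phase polynomial. (Stages 1 and 2 are plausible in outline, though the ``two-at-a-time'' estimate for degree $k\ge 2$ would need to be written out carefully since $P_h(x)\overline{P_{h'}(x)}$ is not literally a $2k$-dimensional derivative of $g$; but those are expositional matters, not gaps.)
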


For $\F=\F_2$, this result is essentially in \cite{akklr} or \cite[Proposition 4.6]{tao-stoc}, but for the convenience of the reader (and in view of the subtle difference between phase polynomials and polynomials, see Remark \ref{fail}) we give a full proof of this lemma in the appendix.

Applying this lemma, we conclude that there exists $\phi^{(n)} \in \Phase_{k-1}(V^{(n)})$ such that
$$ \int_V |G(T_{\vec b_1\cdot \vec v^{(n)}} f^{(n)}, \ldots, T_{\vec b_m\cdot \vec v^{(n)}} f^{(n)}) - \phi^{(n)}| \leq o_{\eps \to 0;k}(1).$$
Inserting this into \eqref{c4} we conclude that
$$ |\int_V f^{(n)} \overline{\phi^{(n)}}| > c/8$$
if $\eps$ is sufficiently small depending on $c,k$.  But this contradicts \eqref{fund}.  The proof of Theorem \ref{main2} is complete.

The proof of Theorem \ref{main} is identical, but with $k$ now set equal to $d$, and Theorem \ref{ergmain-thm} used instead of Theorem \ref{ergmain-thm2}.  We leave the details to the reader.

\begin{remark} It is tempting to try to adapt these arguments to the cyclic setting $\Z/N\Z$, in which the role of polynomials is replaced by that of a \emph{nilsequence} (see \cite{gt:inverse-u3}, \cite{green-tao-linearprimes} for further discussion), thus establishing the \emph{Inverse conjecture for the Gowers norm} for $\Z/N\Z$ that was formulated in those papers.  The analogue of Theorem \ref{ergmain-thm} is known, see \cite{host-kra}.  However, two obstructions remain before one can carry out this program.  The first is to compensate for the rigidity of arithmetic progressions that seems to prevent a counterpart of Corollary \ref{sampling-2} from holding in the cyclic group setting (see Remark \ref{cyclic}).  The second is that whereas polynomiality is locally testable thanks to Lemma \ref{polytest-lem}, it is unclear whether the property of being a nilsequence is similarly testable.  
\end{remark}

\appendix

\section{Proof of Lemma \ref{polytest-lem}}

In this appendix we give a proof of Lemma \ref{polytest-lem}, following the arguments in \cite{akklr} and \cite[Proposition 4.6]{tao-stoc}.  We begin with a variant of Lemma \ref{equiv-lem}:

\begin{lemma}[Discreteness]\label{disc-lem} Let $k \geq 0$, let $V$ be a finite-dimensional vector space, and $\phi \in \Phase_k(V)$.  Then there exists $\theta \in \R/\Z$ and an integer $K \geq 1$ depending only on $\F$ such that $\phi(x)$ is equal to $e^{2\pi i \theta}$ times a $K^{\operatorname{th}}$ root of unity for every $x \in V$.
\end{lemma}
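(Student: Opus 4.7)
The plan is to induct on $k$, with the inductive statement that $\phi^{p^k}$ is a constant function on $V$ for every $\phi \in \Phase_k(V)$, where $p := |\F| = \charac(\F)$ is the (prime) order of $\F$. This at once yields the lemma with $K = p^k$: writing the constant as $e^{2\pi i p^k \theta}$ forces $\phi(x)/e^{2\pi i\theta}$ to be a $p^k$-th root of unity at every point.

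Before starting the induction I would record the pointwise bound $|\phi| \equiv 1$ on $V$, obtained by specializing the defining identity $\mder_{h_1}\ldots\mder_{h_{k+1}}\phi \equiv 1$ to $h_1 = \ldots = h_{k+1} = 0$; this gives $|\phi(x)|^{2^{k+1}} = 1$ and in particular legitimizes taking powers and reciprocals below. The base case $k = 0$ is then immediate: $\mder_h \phi \equiv 1$ for all $h$ forces $\phi$ to be constant, and the magnitude bound gives $\phi \equiv e^{2\pi i \theta}$, so $K = 1$ works.

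For the inductive step, fix $k \geq 1$, $\phi \in \Phase_k(V)$, and set $K_0 := p^{k-1}$. For each $h \in V$ the derivative $\mder_h \phi$ lies in $\Phase_{k-1}(V)$, so by the inductive hypothesis $(\mder_h \phi)^{K_0}$ is a constant function of $x$. Since complex conjugation commutes with integer powers, $(\mder_h \phi(x))^{K_0} = \phi(x+h)^{K_0}\,\overline{\phi(x)}^{K_0} = \mder_h(\phi^{K_0})(x)$, so every first multiplicative derivative of $\phi^{K_0}$ is constant in $x$. Setting $\psi(x) := \phi^{K_0}(x)/\phi^{K_0}(0)$ (permissible since $|\phi^{K_0}(0)| = 1$) and evaluating the relation $\psi(x+h)\overline{\psi(x)} = \psi(0+h)\overline{\psi(0)}$ at $x = 0$, I get $\psi(x+h) = \psi(x)\psi(h)$ for all $x,h \in V$, i.e., $\psi: V \to S^1$ is a group homomorphism. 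Because $V$ is a vector space over the prime field $\F = \F_p$, one has $p\cdot x = 0$ in $V$, hence $\psi(x)^p = \psi(px) = 1$ at every point. Therefore $\phi^{pK_0} = \phi^{p^k}$ equals the constant $\phi^{K_0}(0)^p$, closing the induction.

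The main obstacle: there is no real obstacle---the argument is entirely routine once one identifies the correct intermediate object ($\phi$ raised to the inductive exponent, normalized by its value at the origin) and exploits the prime-field structure of $V$ to pass from ``$S^1$-homomorphism'' to ``$p$-th root of unity''. The only mildly subtle point is that the statement as printed asserts $K$ depends only on $\F$; the natural induction produces $K = |\F|^k$, which genuinely grows with $k$ (witnessed by the $8$-th root of unity in Remark \ref{efc} for $\F = \F_2$, $k = 3$), so one should read the lemma as allowing $K$ to depend on both $\F$ and the degree $k$.
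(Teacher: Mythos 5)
Your proof is correct and, moreover, self-contained, which is more than the paper itself supplies: the paper's ``proof'' of Lemma~\ref{disc-lem} is a one-line citation to \cite[Lemma D.5]{berg} together with the asserted value $K = p^{\lfloor k/p\rfloor+1}$, so there is no argument in the text to compare against. Your induction is the natural one: pass to $\phi^{K_0}$ with $K_0 = p^{k-1}$ so that every first multiplicative derivative becomes a constant (the identity $(\mder_h\phi)^{K_0} = \mder_h(\phi^{K_0})$ is exactly what makes this work), normalize by the value at the origin to turn this family of constants into the cocycle identity $\psi(x+h)=\psi(x)\psi(h)$, and then use that $V$ is an $\F_p$-vector space (so $px=0$) to force $\psi$ to take values in $p$th roots of unity. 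All the steps check out, including the preliminary observation that $|\phi|\equiv 1$ (from $\mder_0^{\,k+1}\phi = |\phi|^{2^{k+1}} = 1$), which legitimizes the division by $\phi^{K_0}(0)$.

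The exponent you obtain, $K = p^k$, is coarser than the cited $p^{\lfloor k/p\rfloor+1}$, but for the uses the lemma is put to (Lemma~\ref{rigid} and Lemma~\ref{polytest-lem}) only the existence of some $K$ matters, so this is harmless. Your closing remark is also well taken: as printed, the lemma says $K$ depends ``only on $\F$'', but it must depend on $k$ as well --- the function $g(x) = e^{2\pi i|x|/8}$ of Remark~\ref{efc} lies in $\Phase_3(\F_2^n)$ and takes values among all $8$th roots of unity, and the paper's own quoted constant $p^{\lfloor k/p\rfloor+1}$ manifestly depends on $k$ too. Reading the statement as ``$K$ depending only on $\F$ and $k$'' is evidently intended, and that is precisely what your proof delivers.
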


\begin{proof}  See \cite[Lemma D.5]{berg} (which gives the explicit value $K = p^{\lfloor k/p\rfloor+1}$, where $p$ is the characteristic of $\F$).
\end{proof}

We also have a rigidity lemma. 

\begin{lemma}[Rigidity]\label{rigid}   Let $k \geq 0$, let $V$ be a finite-dimensional vector space, and $\phi \in \Phase_k(V)$.  Suppose that $\int_V |\phi - 1| \leq \eps$ for some $\eps > 0$.  If $\eps$ is sufficiently small depending on $k, \F$, then $\phi$ is constant.
\end{lemma}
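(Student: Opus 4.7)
The plan is to induct on $k$, using only the elementary fact that $\mder_h \phi \in \Phase_{k-1}(V)$ whenever $\phi \in \Phase_k(V)$ (which follows from commutativity of the $\mder$ operators and the definition of $\Phase_k$). The base case $k=0$ is trivial, since $\Phase_0(V)$ already consists only of constants. For the inductive step, I would assume the lemma at level $k-1$ with some admissible threshold $\eps_{k-1} > 0$. Given $\phi \in \Phase_k(V)$ with $\int_V |\phi - 1| \leq \eps$, the identity $|\mder_h \phi(x) - 1| = |T_h \phi(x) - \phi(x)|$ (valid because $|\phi| \equiv 1$) combined with the triangle inequality and translation invariance gives
\[
\E_{h \in V} \int_V |\mder_h \phi - 1| \;=\; \E_{h \in V} \int_V |T_h \phi - \phi| \;\leq\; 2 \int_V |\phi - 1| \;\leq\; 2\eps.
\]

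Markov's inequality then produces a set $H \subseteq V$ of density at least $1 - 2\eps/\eps_{k-1}$ on which $\int_V |\mder_h \phi - 1| \leq \eps_{k-1}$. Choosing $\eps$ small enough in terms of $\eps_{k-1}$ makes this density strictly larger than $1/2$, so by the inductive hypothesis applied to $\mder_h \phi \in \Phase_{k-1}(V)$ there exist constants $c_h \in \C$ with $\mder_h \phi \equiv c_h$ for every $h \in H$, i.e.\ $\phi(x + h) = c_h \phi(x)$ for all $x$. A standard inclusion--exclusion argument then gives $H + H = V$, and iterating the cocycle relation shows that for every decomposition $y = h_1 + h_2$ with $h_i \in H$, one has $\phi(x + y) = c_{h_1} c_{h_2} \phi(x)$ independently of $x$; hence $\mder_y \phi$ is a pointwise constant for every $y \in V$.

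Setting $\chi(y) := \phi(y)/\phi(0)$ and evaluating the previous identity at $x = 0$ gives $\phi(x + y) = \chi(y) \phi(x)$, from which $\chi(x + y) = \chi(x) \chi(y)$; thus $\chi$ is a unitary character of the finite abelian group $V$. On the other hand, the triangle inequality yields $|\int_V \phi| \geq 1 - \eps$, hence $|\int_V \chi| \geq 1 - \eps$; if $\chi$ were nontrivial, orthogonality of characters would force $\int_V \chi = 0$, contradicting this bound as soon as $\eps < 1$. Therefore $\chi \equiv 1$ and $\phi \equiv \phi(0)$ is constant. The only real subtlety is that the admissible thresholds $\eps_k$ shrink rapidly down the induction (each level loses a factor comparable to $\eps_{k-1}$), but this is harmless since we only need existence of some positive threshold depending on $k$ and $\F$; the discreteness lemma is not actually required for this argument.
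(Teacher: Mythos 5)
Your proof is correct and follows essentially the same inductive strategy as the paper: induct on $k$, pass to $\mder_h\phi \in \Phase_{k-1}(V)$, show $\int_V|\mder_h\phi - 1|$ is small via the pointwise identity $|\mder_h\phi - 1| = |T_h\phi - \phi|$ and unit modularity, apply the inductive hypothesis to get $\mder_h\phi$ constant, and then handle the degree-$1$ case by character orthogonality (the paper disposes of that case with the phrase ``can be worked out by hand,'' which is precisely your orthogonality computation).

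One small inefficiency: you only extract the bound on $\int_V|\mder_h\phi - 1|$ \emph{on average} over $h$, which forces you into a Markov step, a density-$>1/2$ subset $H$, and an $H+H = V$ argument. But the triangle inequality
$$\int_V|\mder_h\phi - 1| \le \int_V|T_h\phi - 1| + \int_V|\phi - 1| = 2\int_V|\phi-1| \le 2\eps$$
holds for \emph{every} $h \in V$ individually, since the uniform measure on $V$ is translation invariant. The paper uses this, so the inductive hypothesis applies to $\mder_h\phi$ for every $h$ directly, $\phi$ lands in $\Phase_1(V)$ with no sumset argument needed, and the threshold only degrades by a constant factor per level rather than multiplicatively. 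This does not affect correctness, since one only needs some positive $\eps_k$, but it is worth noting that the detour is avoidable. You are also right that the discreteness lemma plays no role here; the paper's proof of this lemma likewise does not invoke it.
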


\begin{proof} We induct on $k$.  For $k=0$ the claim is obvious, and for $k=1$ $\phi$ is a linear character (times a phase) and the claim can be worked out by hand.  Now suppose $k \geq 2$ and the claim has already been shown for smaller values of $k$.  Since $\phi$ is a phase polynomial, we have $\mder_0 \ldots \mder_0 \phi = 1$, and thus $\phi$ has unit magnitude.  Observe that if $\int_V |\phi - 1| \leq \eps$, then $\int_V |T_h \phi - 1| \leq \eps$ for every $h \in V$.  Using the elementary estimate
$$ |\mder_h \phi - 1| \leq |\phi - 1| + |T_h \phi - 1|$$
(using the fact that $\phi$ has unit magnitude) we conclude that
$$\int_V |\mder_h \phi - 1| \leq 2\eps$$
for every $h \in V$.  On the other hand, $\mder_h \phi \in \Phase_{k-1}(V)$, so by induction hypothesis (if $\eps$ is small enough) we conclude that $\mder_h \phi$ is constant for all $h \in V$.  Thus $\phi \in \Phase_1(V)$, but then the claim follows from the $k-1$ case.
\end{proof}

We now prove Lemma \ref{polytest-lem}.  The case $k=1$ is easy, so suppose that $k \geq 2$ and the claim has already been established for $k-1$.  To abbreviate the notation we shall write $o(1)$ for $o_{\eps \to 0;k}(1)$.  We say that a statement $P(x)$ holds for \emph{most} $x \in V$ if it holds for $(1-o(1))|V|$ elements of $v$.

We fix $k, V, f$.  We may assume that $\eps$ is small depending on $d$, as the claim is trivial otherwise.  From \eqref{eek-eq} and Markov's inequality we see that
\begin{equation}\label{good-eq} \E_{h_1,\ldots,h_{k-1} \in V} \int_V |\mder_{h_1} \ldots \mder_{h_{k-1}} \mder_{h} f - 1| = o(1)
\end{equation}
for most $h \in V$.  Let us call $h$ \emph{good} if \eqref{good-eq} holds.  Applying the induction hypothesis, we conclude that for any good $h$ there exists\footnote{This quantity plays the same role that \emph{cocycles} do in ergodic theory.} $\phi_{h} \in \Phase_{k-2}(V)$ such that
$$ \int_V |\mder_{h} f - \phi_{h}| \leq o(1).$$
In particular, this implies (by Markov's inequality) that for all good $h$, we have
$$ f(x+h) \overline{f(x)} = \phi_{h}(x) + o(1)$$
for most $V$.  Since $f$ is bounded in magnitude by $1$, this implies that
$$ |f(x)| = 1- o(1)$$
for most $x$, and for all good $h$ we have
\begin{equation}\label{fxh-eq} f(x+h) = \phi_h(x) f(x) + o(1)
\end{equation}
for most $x$.

We now pause to perform a discretisation trick.  Write $p := \charac(\F)$. From repeated applications of \eqref{fxh-eq} we see that
$$ f(x) = f(x+ph) = \phi_h(x) \phi_h(x+h) \ldots \phi_h(x+(p-1)h) f(x) + o(1)$$
for most $x$, and thus
$$ \phi_h(x) \phi_h(x+h) \ldots \phi_h(x+(p-1)h) = 1 + o(1)$$
for at least one $x$.  On the other hand, from Lemma \ref{disc-lem} $\phi_h$ takes values in $e^{2\pi i \theta}$ times $K^{\operatorname{th}}$ roots of unity for some fixed $K$ depending only on $d,p$.  Thus $e^{2\pi i p \theta}$ times a $K^{\operatorname{th}}$ root of unity is within $o(1)$ of $1$, and so $e^{2\pi i \theta}$ lies within $o(1)$ of a $pK^{\operatorname{th}}$ root of unity.  Rotating $\phi_h$ by $o(1)$ if necessary we may assume that $e^{2\pi i \theta}$ is \emph{exactly} a $pK^{\operatorname{th}}$ root of unity, and in particular we have
\begin{equation}\label{kdisc-eq}
\phi_h^{pK} \equiv 1
\end{equation}
whenever $h$ is good.

Now suppose that $h_1,h_2,h_3,h_4$ are good and form an \emph{additive quadruple} in the sense that $h_1+h_2=h_3+h_4$.  Then from \eqref{fxh-eq} we see that
\begin{equation}\label{fsh}
 f(x+h_1+h_2) = f(x) \phi_{h_1}(x) \phi_{h_2}(x+h_1) + o(1)
\end{equation}
for most $x$, and similarly
$$ f(x+h_3+h_4) = f(x) \phi_{h_3}(x) \phi_{h_4}(x+h_3) + o(1)$$
for most $x$.  Since $|f(x)| = 1+o(1)$ for most $x$, we conclude the approximate cocycle relationship
$$ \phi_{h_1}(x) \phi_{h_2}(x+h_1) \overline{\phi_{h_3}(x)} \overline{\phi_{h_4}(x+h_3)} = 1 + o(1)$$
for most $x$.  In particular, the average of the left-hand side in $x$ is $1-o(1)$.  Applying Lemma \ref{rigid} (and assuming $\eps$ small enough), we conclude that the left-hand side is \emph{constant} in $x$; using the discretisation \eqref{kdisc-eq}, we conclude (again for $\eps$ small enough) that it is in fact $1$.  Thus
\begin{equation}\label{pquad-eq} \phi_{h_1}(x) \phi_{h_2}(x+h_1) = \phi_{h_3}(x) \phi_{h_4}(x+h_3) 
\end{equation}
for \emph{all} $x$ and any good additive quadruple $h_1,h_2,h_3,h_4$.

Now for any $k \in V$, define the quantity $\psi(k) \in \C$ by the formula
\begin{equation}\label{rk}
 \psi(k) := \phi_{h_1}(0) \phi_{h_2}(h_1)
\end{equation}
whenever $h_1,h_2,h_1+h_2$ are simultaenously good.  Note that the existence of such an $h_1,h_2$ is guaranteed since most $h$ are good, and \eqref{pquad-eq} ensures that the right-hand side of \eqref{rk} does not depend on the exact choice of $h_1,h_2$ and so $\psi$ is well-defined.  From \eqref{kdisc-eq} we see that $\psi$ takes values in the $pK^{\operatorname{th}}$ roots of unity, and in particular only has $O(1)$ possible values.

Now let $x \in V$ and $h$ be good.  Then, since most elements of $V$ are good, we can find good $r_1,r_2,s_1,s_2$ such that $r_1+r_2 = x$ and $s_1+s_2=x+h$.  From \eqref{fsh} we see that
$$ f(y+x) = f(y+r_1+r_2) = f(y) \phi_{r_1}(y) \phi_{r_2}(y+r_1) + o(1)$$
and
$$ f(y+x+h) = f(y+s_1+s_2) = f(y) \phi_{s_1}(y) \phi_{s_2}(y+s_1) + o(1)$$
for most $y$.  Also from \eqref{fxh-eq} we have
$$ f(y+x+h) = f(y+x) \phi_h(y+x) + o(1)$$
for most $y$.  Combining these (and the fact that $|f(y)| = 1+o(1)$ for most $y$) we see that
$$ \phi_{s_1}(y) \phi_{s_2}(y+s_1) \overline{\phi_{r_1}(y)} \overline{\phi_{r_2}(y+r_1)} \overline{\phi_h(y+x)} = 1+o(1)$$
for most $y$.  Taking expectations and applying Lemma \ref{rigid} and \eqref{kdisc-eq} as before, we conclude that
$$ \phi_{s_1}(y) \phi_{s_2}(y+s_1) \overline{\phi_{r_1}(y)} \overline{\phi_{r_2}(y+r_1)} \overline{\phi_h(y+x)} = 1$$
for \emph{all} $y$.  Specialising to $y=0$ and applying \eqref{rk} we conclude that
\begin{equation}\label{phx}
\phi_h(x) = \psi(x+h) \overline{\psi(x)} = \mder_h \psi(x)
\end{equation}
for all $x \in V$ and good $h$; thus we have succesfully ``integrated'' $\phi_h$.  We can then extend $\phi_h(x)$ to all $h \in V$ (not just good $h$) by viewing \eqref{phx} as a \emph{definition}.  Observe that if $h \in V$, then $h=h_1+h_2$ for some good $h_1,h_2$, and from \eqref{phx} we have
$$ \phi_h(x) = \phi_{h_1}(x) \phi_{h_2}(x+h_1).$$
In particular, since the right-hand side lies in $\Phase_{k-2}(V)$, the left-hand side does also.  Thus we see that $\mder_h \psi \in \Phase_{k-2}(V)$ for all $h \in V$, and thus $Q \in \Phase_{k-1}(V)$.  If we then set $g(x) := f(x) \overline{\psi}(x)$, then from \eqref{fxh-eq}, \eqref{phx} we see that for every $h \in H$ we have
$$ g(x+h) = g(x) + o(1)$$
for most $x$.  From Fubini's theorem, we thus conclude that there exists an $x$ such that $g(x+h) = g(x)+o(1)$ for most $h$, thus $g$ is almost constant.  Since $|g(x)| = 1+o(1)$ for most $x$, we thus conclude the existence of a phase $\theta \in \R/\Z$ such that $g(x) = e^{2\pi i \theta} + o(1)$ for most $x$.  We conclude that
$$ f(x) = e^{2\pi i \theta} \psi(x) + o(1)$$
for most $x$, and Lemma \ref{polytest-lem} then follows.

\end{document}